\documentclass[a4paper, 10pt, twoside]{article}

\usepackage{amsmath, amscd, amsfonts, amssymb, amsthm, latexsym, url, color, todonotes, bm, framed, rotating, enumerate} 
\usepackage{graphicx}
\usepackage[left=1in, right=1in, top=1.2in, bottom=1in, includefoot, headheight=13.6pt]{geometry}
\usepackage{booktabs}
\usepackage{adjustbox}
\usepackage{mathtools}
\input{xy}
\xyoption{all}

\usepackage[colorlinks,breaklinks=true]{hyperref}
\usepackage[figure,table]{hypcap}
\hypersetup{
	bookmarksnumbered,
	pdfstartview={FitH},
	citecolor={black},
	linkcolor={black},
	urlcolor={black},
	pdfpagemode={UseOutlines}
}
\makeatletter
\newcommand\org@hypertarget{}
\let\org@hypertarget\hypertarget
\renewcommand\hypertarget[2]{%
  \Hy@raisedlink{\org@hypertarget{#1}{}}#2%
} 
\makeatother 

\newtheorem{theorem}{Theorem}[section]

\newtheorem{corollary}[theorem]{Corollary}
\newtheorem{proposition}[theorem]{Proposition}

\theoremstyle{definition}
\newtheorem{definition}[theorem]{Definition}
\newtheorem{remark}[theorem]{Remark}

\newtheorem{conjecture}[theorem]{Conjecture}
\newtheorem{question}[theorem]{Question}

\newcommand{\xysquare}[8]{
\[\xymatrix{
#1 \ar@{#5}[r] \ar@{#6}[d] & #2 \ar@{#7}[d]\\
#3 \ar@{#8}[r] & #4
}\]
}

\newcommand{\bb}{\mathbb}

\newcommand{\comment}[1]{}

\renewcommand{\phi}{\varphi}

\newcommand{\roi}{\mathcal{O}}

\newcommand{\sub}[1]{{\mbox{\rm \scriptsize #1}}}

\newcommand{\To}{\longrightarrow}

\newcommand{\xto}{\xrightarrow}

\renewcommand{\cal}{\mathcal}
\renewcommand{\hat}{\widehat}
\renewcommand{\frak}{\mathfrak}

\renewcommand{\tilde}{\widetilde}

\renewcommand{\ker}{\operatorname{Ker}}
\renewcommand{\projlim}{\varprojlim}

\DeclareMathOperator{\dlog}{dlog}

\DeclareMathOperator{\Gal}{Gal}

\DeclareMathOperator{\Hom}{Hom}

\DeclareMathOperator{\Spec}{Spec}

\DeclareMathOperator{\SK}{SK}
\DeclareMathOperator{\ab}{ab}
\DeclareMathOperator{\rec}{rec}

\newcommand{\CH}{C\!H}

\newcommand{\xTo}[1]{\stackrel{#1}{\To}}

\newcommand{\rcoeq}[3]{\xymatrix{ #1\ar@/^3mm/[r]^f \ar@/_3mm/[r]_g & #2 \ar[l]_e\ar[r] & #3}}

\usepackage[bbgreekl]{mathbbol}
\DeclareSymbolFontAlphabet{\mathbbm}{bbold}

\usepackage{fancyhdr}

\usepackage{sectsty}
\sectionfont{\Large\sc\centering}
\chapterfont{\large\sc\centering}
\chaptertitlefont{\LARGE\centering}
\partfont{\centering}

\begin{document}
\itemsep0pt

\title{
Higher dimensional local class field theory}

\author{Morten L\"uders}

\date{}

\maketitle

\begin{abstract}
In this largely expository article we give an introduction to higher dimensional local class field theory, more precisely class field theory for smooth projective schemes over local fields. 
We begin with a quick summary of some of the main results of classical local class field theory. Then we explain the main definitions and statements in the higher dimensional setting. We give a new and simple proof of the $\ell$-part, which is originally due to Jannsen--Saito and Forr\'e. Inspired by this proof, we propose a conjecture which would imply the remaining open cases with $p$-coefficients and which improves our understanding of the $p$-part. A particular goal of the exposition is to outline analogies between the classical and the higher dimensional theory and on how the unit filtration and ramification, i.e. the $p$- and the $\ell$-part, can be interpreted in the higher dimensional setting. 
\end{abstract}
\begingroup
\small
\noindent\textbf{2020 Mathematics Subject Classification.}
Primary 11R37, 11G45; secondary 11G25, 14C25.

\smallskip

\noindent\textbf{Key words and phrases.}
Class field theory, local fields, schemes, algebraic cycles.
\par
\endgroup

\section{Introduction}
Let $K$ be a local field of characteristic zero, i.e. a finite extension of $\bb Q_p$. Local class field theory is one of the great mathematical achievements of the first half of the 20th century due to Artin, Hilbert, Tagaki, Tate to name just some of the most important names. It relates abelian extensions of $K$, that is objects lying over $K$, to subgroups of $K^\times$, that is to information that is more intrinsic to $K$. To be more precise, in local class field theory a so called \textit{reciprocity map} 
$$\rec_{K}: K^\times\To \Gal(\bar{K}/K)^{\rm ab}$$ 
and a one-to-one correspondence between closed subgroups of finite index of $K^\times$ and abelian extensions of $K$ are constructed. 
In the second half of the 20th century, initiated by Bloch, Colliot-Thélène, Kato, Lang, Saito, Sansuc and Soulé, similar reciprocity maps have been defined and studied for schemes over $\bb F_p,\bb Q_p$ and $\bb Z$ replacing the Galois group by the \'etale fundamental group and the unit group by a higher Chow group whose bidegree depends on the base field (see for example \cite{Bloch1981, CSS83, Ka86, KaS83, KaS86, Lang1956, Saito1985}). This study has been continued in the 21st century in particular by Forré, Jannsen, Kerz, Saito, Schmidt and Wiesend and, at least in the smooth projective case, the only remaining open problem is to understand the $p$-part for schemes of dimension $>2$ over local fields (see for example \cite{Fo15, JS03, KeS16, Schmidt05, Wiesend07}). A nice exposition to higher dimensional class field theory over $\bb F_p$ and $\bb Z$ can be found in \cite{Sz10}.

The goal of this article is to work out the precise analogies between higher local and classical local class field theory. A particular focus will be on which part of the Chow group of a smooth projective scheme $X_K$ over $K$ captures the information about abelian unramified, tamely ramified and wildely ramified covers of $X_K$ which is contained in the \'etale fundamental group. In the course of this discussion we will give a simple proof, if we admit some black boxes concerning base change and higher dimensional class field theory over $\bb F_p$, of the $\ell$-part of the following theorem, which is due to Jannsen--Saito in dimension $\leq 2$ (see \cite{JS03}) and Forré in arbitrary dimension (see \cite{Fo15}).
\begin{theorem}\label{theorem_main_intro} 
Let $K$ be a local field with residue field $k$ of characteristic $p$ and let $X_K$ be a smooth projective $K$-scheme of dimension $d$ with good reduction. Then the reciprocity map
$$\rec_{X_K}:\CH^{d+1}(X_K,1) \to \pi_1^{\ab}(X_K)$$
is an isomorphism mod $\ell\in \bb N_{>0}$ for all $(\ell,p)=1$ (resp. mod $\ell$ for all $\ell\in \bb N_{>0}$ if $\dim X_K=d=2$). 
\end{theorem}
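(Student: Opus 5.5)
The plan is to reduce to higher dimensional class field theory over the finite field $k$, using a regular proper model $X$ over $\roi_K$ with special fibre $X_0$. Such a model exists with $X_0$ smooth projective over $k$ because $X_K$ has good reduction. We compare the localization sequence in higher Chow groups with the corresponding sequence for étale fundamental groups. Both are taken with coefficients $\bb Z/\ell$.

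On the cycle side, the localization sequence for $X_0\hookrightarrow X \hookleftarrow X_K$ gives
$$\CH^{d+1}(X,1;\bb Z/\ell)\to \CH^{d+1}(X_K,1;\bb Z/\ell)\xto{\partial} \CH^{d}(X_0;\bb Z/\ell)\to \CH^{d+1}(X;\bb Z/\ell).$$
The last map is the pushforward along $X_0\hookrightarrow X$, and we expect it to vanish mod $\ell$: the degree-$(d+1)$ Chow group of the $(d+1)$-dimensional scheme $X$ is generated by closed points of $X_0$. Those points are rationally equivalent to pushforwards of divisors on horizontal curves, and these should be killed up to norms. On the fundamental group side, there is the exact sequence
$$0\to \pi_1^{\ab}(X_K)^{\rm ram}/\ell \to \pi_1^{\ab}(X_K)/\ell\to \pi_1^{\ab}(X)/\ell\to 0,$$
and $\pi_1^{\ab}(X)\cong\pi_1^{\ab}(X_0)$ by proper base change. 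The reciprocity map is compatible with these sequences. The boundary $\partial$ corresponds to the map to $\pi_1^{\ab}(X_0)$, which is the classical fact that the valuation corresponds to Frobenius.

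Step one is to invoke Kato--Saito's unramified class field theory over $\bb F_p$: the map $\CH_0(X_0)/\ell\to\pi_1^{\ab}(X_0)/\ell$ is an isomorphism, since $X_0$ is smooth projective. This handles the quotient part. Step two is the kernel part, where the two cases split.

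For $(\ell,p)=1$, a Kummer covering of $X_K$ that is unramified on the special fibre becomes tamely ramified along $X_0$. Abhyankar's lemma together with purity shows that the tame part is computed by $\pi_1^{\ab}(X_0)$-coinvariants twisted by $\mu_\ell$. Dually, this is $H^{2d}(X_0,\mu_\ell^{\otimes d})$-type information, matched with the image of $\CH^{d+1}(X,1;\bb Z/\ell)$. To identify that image, I would use the black box on base change: restriction $\CH^{d+1}(X,1;\bb Z/\ell)\to \CH^{d+1}(X_0,1;\bb Z/\ell)$ is an isomorphism for $\ell$ prime to $p$, by rigidity and Kerz--Esnault--Wittenberg-type restriction. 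By Kato's conjecture for $X_0$ (Kerz--Saito), $\CH^{d+1}(X_0,1;\bb Z/\ell)\cong H^{2d+1}(X_0,\bb Z/\ell(d+1))\cong H^1(X_0,\bb Z/\ell)^\vee\otimes\mu_\ell$-twists. This matches the tame ramification part via the étale localization sequence and the five lemma.

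For $\ell=p^n$ and $d=2$, the ramified part is wild, and one must use Kato's ramification filtration. Here I would instead cite the Jannsen--Saito computation via $p$-adic étale cohomology (Bloch--Kato/Gabber purity in dimension $\le 2$). This is the step I expect to be the main obstacle. Its difficulty is precisely the reason the $p$-part is open for $d>2$, so for it I would rely on the known surface case rather than aim for a uniform argument.
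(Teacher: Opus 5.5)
Your skeleton matches the paper's. You compare the cycle and étale localization sequences with $\Lambda=\bb Z/\ell$, use finite-field class field theory for $\CH^d(X_0)_\Lambda\cong\pi_1^{\ab}(X_0)/\ell$, take the restriction isomorphism $\CH^{d+1}(X,1)_\Lambda\cong\CH^{d+1}(X_0,1)_\Lambda$ as a black box, and cite Jannsen--Saito for the $p$-part when $d=2$. However, step two does not go through as written, for two reasons.

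First, you match the ramified part with the wrong group. $H^{2d}(X_0,\mu_\ell^{\otimes d})\cong H^1(X_0,\bb Z/\ell)^\vee=\pi_1^{\ab}(X_0)/\ell$ is the \emph{unramified} quotient you already used in step one. $H^{2d+1}(X_0,\mu_\ell^{\otimes d+1})$ is not a twist of it: for geometrically connected $X_0$ it equals $H^1(k,\mu_\ell)\cong k^\times/\ell$, the analogue of $U^{(0)}/U^{(1)}$. Also, $\CH^{d+1}(X_0,1)_\Lambda\cong H^{2d+1}(X_0,\mu_\ell^{\otimes d+1})$ is not Kato's conjecture; it follows from the coniveau spectral sequence and cohomological dimension. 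The link you are missing is proper base change, $H^{2d+1}(X,\mu_\ell^{\otimes d+1})\cong H^{2d+1}(X_0,\mu_\ell^{\otimes d+1})$, which makes $\rho_{X,\ell}^{2d+1,d+1}$ an isomorphism. It must be combined with the étale localization sequence
\[ H^{2d-1}(X_0,\mu_\ell^{\otimes d})\xrightarrow{i_*}H^{2d+1}(X,\mu_\ell^{\otimes d+1})\to H^{2d+1}(X_K,\mu_\ell^{\otimes d+1})\to H^{2d}(X_0,\mu_\ell^{\otimes d})\to 0, \]
whose third term is $\pi_1^{\ab}(X_K)/\ell$ by Poincaré duality. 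This is how the paper identifies the ramified part.

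Second, injectivity is not justified. Your sequence $0\to\pi_1^{\ab}(X_K)^{\mathrm{ram}}/\ell\to\pi_1^{\ab}(X_K)/\ell\to\pi_1^{\ab}(X)/\ell\to 0$ is not automatic, because reduction mod $\ell$ is only right exact. The kernel of the left map is the image of $\pi_1^{\ab}(X)[\ell]$ under the snake connecting map. That torsion is often nonzero, e.g.\ when $X_0$ is an elliptic curve with a $k$-rational point of order $\ell$, so left exactness is a real claim. Without it, the five lemma gives only surjectivity of $\rec_{X_K}^{\bb Z/\ell\bb Z}$. The paper closes this end with the $j=1$ case of the Kato conjectures, $\CH^d(X_0,1)_\Lambda\cong H^{2d-1}(X_0,\mu_\ell^{\otimes d})$, on the leftmost vertical arrow; you never invoke this.

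Alternatively, you can prove your left exactness directly:
\begin{itemize}
\item By proper base change, every class in $H^{2d-1}(X_0,\mu_\ell^{\otimes d})$ has the form $i^*y$.
\item The projection formula gives $i_*i^*y=y\cup i_*(1)$.
\item Here $i_*(1)=c_1(\mathcal{O}_X(X_0))=0$ because $X_0=\mathrm{div}(\pi)$, so $i_*=0$.
\end{itemize}
The bottom row is then short exact, and the four lemma finishes without the degree-one Kato input. This is slightly leaner than the paper's argument.

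A minor point: $\CH^{d+1}(X)=0$ needs no hedging. Each closed point of $X_0$ lifts to a horizontal curve $\Spec\roi_{K'}$, on which it is $\mathrm{div}(\pi)$.
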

Furthermore, we propose a conjectural relation of $\CH^{d+1}(X_K,1)_{\bb Z/p^r\bb Z}$ to a version of this Chow group on the (thickened) special fiber. This conjecture would imply that $\rec_{X_K}^{\bb Z/p^r\bb Z}$ is an isomorphism for $d\geq 3$ and gives an interpretation of the unit filtration in the higher dimensional setting.

In Section \ref{section_cft} we recall some of the main results of classical local class field theory. In Section \ref{section_hcft} we first define the main players in the higher dimensional case. Then we turn to the above described goal. We summarise the analogies between the higher and classical theory in Table \ref{tab:PECs}. The author claims no  originality for the results of this article. 
Due to the large amount of literature on class field theory, we do not attempt to give a complete account of all results and works on the topic but focus on some of the main ideas.

\paragraph{Acknowledgements.} I would like to thank Immanuel Klevesath and Nils Witt for a careful reading and very helpful feedback. The author is funded by the Deutsche Forschungsgemeinschaft (DFG, German Research Foundation), project number $557768455$ and by the Deutsche Forschungsgemeinschaft (DFG, German Research Foundation)
TRR 326 \textit{Geometry and Arithmetic of Uniformized Structures}, project number 444845124.

\section{Classical local class field theory}\label{section_cft}
In this section let $K$ be a local field of characteristic zero, i.e. a finite extension of $\bb Q_p$. Let $\roi_K$ be the ring of integers of $K$ with maximal ideal $\mathfrak{p}$ and residue field $k$ of characteristic $p$. 
There are many excellent books on classical local class field theory; see for example \cite{NeukirchKkt}.
The main theorem of local class field theory is the following: 
\begin{theorem}\label{theorem_main_local_cft}
\begin{enumerate}
\item There exists a unique homomorphism, called Artin's local reciprocity map,
$$\rec_{K}: K^\times\To \Gal(\bar{K}/K)^{\rm ab}$$
which for every finite abelian Galois extension $L/K$ induces an isomorphism
$$\rec_{L/K}:K^\times/\mathrm{Nm}_{L/K}L^\times \xTo{\cong} \Gal(L/K)^{}$$
and such that for every prime element $\pi\in K$ and every finite unramified extension $L$ of $K$, $\rec_{L/K}(\pi)=\mathrm{Frob}_{L/K}$.
\item The association $L\mapsto \mathrm{Nm}_{L/K}L^\times\subset K^\times$ is an inclusion reversing bijection from the set of abelian extensions of $K$ to the set of norm subgroups of $K^\times$. Furthermore, the norm subgroups of $K^\times$ are exactly the open (and therefore closed) subgroups of finite index of $K^\times$ ("Tagaki's existence theorem'').
\end{enumerate}
\end{theorem}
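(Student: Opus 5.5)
We follow the cohomological route (Tate--Nakayama via class formations), since it fits the paper's later emphasis on duality. The plan has three stages. Construct a class formation for $G_K=\Gal(\bar K/K)$ acting on $\bar K^\times$. Deduce the reciprocity isomorphisms $\rec_{L/K}$ from Tate's theorem and check they are compatible. Then prove the existence theorem via Kummer theory and the topology of $K^\times$.

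\textbf{Class formation and reciprocity.} First, Hilbert 90 gives $H^1(\Gal(L/K),L^\times)=0$ for every finite Galois $L/K$. Next we compute the Brauer group $H^2(\Gal(L/K),L^\times)$, starting with $L/K$ unramified of degree $n$. There the valuation sequence $0\to \roi_L^\times\to L^\times\to \bb Z\to 0$ and the cohomological triviality of $\roi_L^\times$ combine to give $H^2(\Gal(L/K),L^\times)\cong H^2(\bb Z/n,\bb Z)\cong H^1(\bb Z/n,\bb Q/\bb Z)\cong \frac1n\bb Z/\bb Z$. Cohomological triviality of $\roi_L^\times$ follows by filtering it by the higher unit groups $U^{(i)}$: the graded pieces $k_L^\times$ and $k_L$ are cohomologically trivial (Lang's theorem, resp.\ normal basis), and $\roi_L^\times=\projlim \roi_L^\times/U^{(i)}$. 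This yields an invariant map $\mathrm{inv}_K: \mathrm{Br}(K)\to\bb Q/\bb Z$ that is an isomorphism on the unramified part, normalized via the arithmetic Frobenius.

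The key step, and the one we expect to be the main obstacle, is showing that every class in $\mathrm{Br}(K)$ is split by an unramified extension. Equivalently, for every finite Galois $L/K$ we need $|H^2(\Gal(L/K),L^\times)|\le [L:K]$. We would reduce to $L/K$ cyclic by a dévissage through $p$-Sylow subgroups and solvable towers. For cyclic $L/K$ we would compute the Herbrand quotient $h(L^\times)=[L:K]$: since $h(\bb Z)=n$, it suffices to show $h(\roi_L^\times)=1$. For this, pass to an open subgroup $V=\exp(\pi^m\roi_L)$, which is isomorphic to $\roi_L$ and contains a copy of $\roi_K[G]$ of finite index, and use that the Herbrand quotient is $1$ on finite modules. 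Combined with $H^1=0$, this gives the bound.

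Tate's theorem then provides isomorphisms $\hat H^{-2}(G,\bb Z)=G^{\rm ab}\cong \hat H^0(G,L^\times)=K^\times/\mathrm{Nm}L^\times$, given by cup product with the fundamental class $u_{L/K}$ with $\mathrm{inv}(u_{L/K})=1/[L:K]$. We define $\rec_{L/K}$ as the inverse. Compatibility under restriction and inflation of fundamental classes makes the $\rec_{L/K}$ form a projective system, and we set $\rec_K=\projlim\rec_{L/K}$. The normalization $\rec_{L/K}(\pi)=\mathrm{Frob}_{L/K}$ for unramified $L$ comes from an explicit cup product computation using the valuation sequence. Uniqueness holds because $K^\times$ is generated by prime elements, and every abelian $L$ lies in some $L\cdot K'$ with $K'$ unramified and $\pi$ a norm from a suitable Lubin--Tate extension. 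Alternatively, uniqueness follows from functoriality plus the Frobenius condition.

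\textbf{Existence theorem.} Part 2's bijectivity and order reversal follow formally from $\mathrm{Nm}_{LL'}=\mathrm{Nm}_L\cap\mathrm{Nm}_{L'}$ and $\mathrm{Nm}_{L\cap L'}=\mathrm{Nm}_L\mathrm{Nm}_{L'}$, both consequences of the isomorphisms. Norm groups are open of finite index because $\mathrm{Nm}(\roi_L^\times)$ is open, by compactness and the local inverse function theorem. For the converse, let $N$ be open of finite index $n$. Then $N\supseteq (K^\times)^n U^{(m)}$, and a subgroup containing a norm group is itself a norm group. Adjoining $\mu_n$ and using that norm groups descend, we reduce to showing $(K^\times)^n$ is a norm group when $\mu_n\subset K$. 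By Kummer theory, $(K^\times)^n$ is the norm group of $L=K(\sqrt[n]{K^\times})$: the two subgroups have the same index by the Kummer pairing together with $[K^\times:(K^\times)^n]=n^2|n|_K^{-1}$, and one contains the other since $\mathrm{Nm}_{L/K}L^\times\supseteq (K^\times)^n$.
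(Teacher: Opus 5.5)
Your route is the one the paper points to: Nakayama--Tate duality via the fundamental class, assembled by a class formation, with Neukirch cited for details. Your construction of $\rec_{L/K}$ and your existence theorem are the standard Serre/Cassels--Fr\"ohlich argument and go through. \textbf{The real gap is uniqueness.} Read charitably, your sentence says that every finite abelian $L$ lies in $K_{\pi,m}K'$, with $K_{\pi,m}$ a Lubin--Tate extension and $K'$ unramified. That is local Kronecker--Weber. You never prove it, and it is normally deduced from the existence theorem together with Lubin--Tate theory (the computation $\mathrm{Nm}(K_{\pi,m}^\times)=\pi^{\bb Z}\times U^{(m)}$). Your fallback, ``functoriality plus the Frobenius condition'', fails as stated. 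A competing $\phi$ is only assumed to satisfy the two conditions of the theorem, and the Frobenius condition says nothing on totally ramified extensions, so the norm condition has to be used.

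You can close the gap with what you have already built. Given a finite abelian $L$ and a prime $\pi$, take $K'$ unramified of degree $f$ divisible by the exponent of $\Gal(L/K)$. Put $M=LK'$, $\sigma=\rec_K(\pi)|_M$ and $F=M^{\langle\sigma\rangle}$. Then $\rec_{F/K}(\pi)=1$, so $\pi\in\mathrm{Nm}_{F/K}F^\times$. Next, $\sigma|_{K'}=\mathrm{Frob}_{K'/K}$ has order $f$, while the exponent of $\Gal(M/K)$ divides $f$; hence $[M:F]=f$. Also $F\cap K'=K$, so $[FK':F]=f$ and $M=FK'$. Any $\phi$ as in the statement satisfies $\phi(\pi)|_F=1$ by the norm condition and $\phi(\pi)|_{K'}=\mathrm{Frob}_{K'/K}$ by the Frobenius condition. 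Hence $\phi(\pi)|_M=\sigma$, so $\phi(\pi)$ and $\rec_K(\pi)$ agree on $L$. Since prime elements generate $K^\times$, $\phi=\rec_K$.

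A smaller omission: the bound $|H^2(\Gal(L/K),L^\times)|\le[L:K]$ is not by itself ``equivalent'' to unramified splitting. Tate's theorem needs $H^2$ cyclic of order $[L:L']$ for every intermediate field $L'$, with compatible fundamental classes. That is, you need $\mathrm{Br}(L/K)=\mathrm{Br}(K_n/K)$, where $K_n$ is the unramified extension of degree $n=[L:K]$. For this you need the formula $\mathrm{inv}_L\circ\mathrm{Res}_{L/K}=[L:K]\cdot\mathrm{inv}_K$ on $\mathrm{Br}(K^{\rm nr}/K)$; the factors $e$ and $f$ come from the valuation and from the Frobenius. This formula gives $\mathrm{Br}(K_n/K)\subseteq\mathrm{Br}(L/K)$, and your bound then forces equality. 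Relatedly, $\mathrm{inv}_K$ is only defined on $\mathrm{Br}(K^{\rm nr}/K)$ until this step is done, not on all of $\mathrm{Br}(K)$ as your first stage suggests.
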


\begin{remark}\label{remark_CFT_finite_field}
The reciprocity map $\rec_{\bb F_p}$ for a finite field $\bb F_p$ is given by the natural inclusion $
\bb Z\to \hat{\bb Z}$ and corresponds to the unramified extensions in Theorem \ref{theorem_main_local_cft}.
\end{remark}

One possible construction of $\rec_{L/K}$ is through the following perfect pairing, called Nakayama--Tate duality, of Tate cohomology groups
$$\langle -,-\rangle_{}:\hat{H}^0(\Gal(L/K),\Hom(\bb Z,L^\times))\times H^2(\Gal(L/K),\bb Z)\xto{\cup}H^2(\Gal(L/K),L^\times)\cong \frac{1}{\sharp G}\bb Z/\bb Z.$$
Since $$\hat{H}^0(\Gal(L/K),\Hom(\bb Z,L^\times))\cong K^\times/N_{L/K}L^\times$$ and $$H^2(\Gal(L/K),\bb Z)^*\cong H^1(\Gal(L/K),\bb Q/\bb Z)^*\cong \Gal(L/K)^{\rm ab},$$
where ${}^*$ denotes the Pontrjagin dual, we get in particular a morphism $\rec_{L/K}:K^\times \xTo{} \Gal(L/K)^{\rm ab}.$ Using the notion of a \textit{class formation}, $\rec_{L/K}$ can be used to construct $\rec_K$.

In order to get a more refined understanding of both sides of the reciprocity map one defines filtrations on the domain and codomain of $\rec_K$. The right filtration on the domain is the unit filtration:
\begin{definition} Let
$$\dots \subset 1+\mathfrak{p}^2\roi_K\subset1+\mathfrak{p}\roi_K\subset \roi_K^\times\subset K^\times$$
be the \textit{unit filtration} and set $U^{(0)}:= \roi_K^\times$ and $U^{(i)}:= 1+\mathfrak{p}^i\roi_K$ for $i\geq 1$. 
\end{definition}
The unit filtration, and its graded pieces, have the following properties: firstly, there is a short exact sequence \begin{equation}\label{valuation_sequence}
0\To U^{(0)}\To K^\times \xTo{v} \bb Z\To 0,
\end{equation}
in which $v$ is the $\frak p$-adic valuation.
Secondly, the quotient $U^{(0)}/U^{(1)}$ is of order prime to $p$ and thirdly the quotients $U^{(i)}/U^{(i+1)}$ are $p$-groups for $i\geq 1$. In particular, $U^{(1)}$ is the unique $p$-Sylow subgroup of $U^{(0)}$. 

On the target of $\rec_K$ one can define two different and important filtrations. One in the \textit{lower} and one in the \textit{upper numbering}, the latter one corresponding to the unit filtration under the reciprocity map. Since the definition of the filtration in the upper numbering is complicated we just give the first three pieces; these suffice for our purposes in the next section. 
\begin{definition} Let $L$ be a Galois extension of $K$. Let $$G^{(-1)}=\Gal(L/K),$$ 
$$G^{(0)}=\mathrm{I}(L/K),$$ 
$$G^{(1)}=\mathrm{R}(L/K):=\{\sigma\in \Gal(L/K)\;|\;\frac{\sigma x}{x}=1 \;\text{mod}\; \mathfrak{p}_L\; \forall x\in L^\times\}$$ be the Galois group, the \textit{inertia} group and the \textit{ramification} group.
\end{definition} 
Under the Galois correspondence, $G^{(0)}=\mathrm{I}(L/K)$ corresponds to the maximal unramified extension $L^{\mathrm{I}(L/K)}$ of $K$ in $L$
and $G^{(1)}=\mathrm{R}(L/K)$ to the maximal tamely ramified extension $L^\mathrm{R(L/K)}$ of $K$ in $L$. The unit and the ramification filtration are related via the following theorem.
\begin{theorem} Let $K^{\mathrm{ab}}$ be the maximal abelian  Galois extension of $K$. $G^{(1)}=\mathrm{R}(K^{\mathrm{ab}}/K)$ is the unique $p$-Sylow subgroup of $G^{(0)}=\mathrm{I}(K^{\mathrm{ab}}/K)$. 
Then the reciprocity map induces isomorphisms $$\rec_{K}|_{U^{(0)}}:U^{(0)}\xTo{\cong}\mathrm{I}(K^{\mathrm{ab}}/K)$$ 
and
$$\rec_{K}|_{U^{(1)}}:U^{(1)}\xTo{\cong}\mathrm{R}(K^{\mathrm{ab}}/K).$$
\end{theorem}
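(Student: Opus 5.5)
Throughout we write $\mathrm{I}=\mathrm{I}(K^{\mathrm{ab}}/K)$ and $\mathrm{R}=\mathrm{R}(K^{\mathrm{ab}}/K)$, and we pass to finite abelian extensions $L/K$ and then take the inverse limit. The argument uses Theorem \ref{theorem_main_local_cft}, the properties of the unit filtration, and standard facts on ramification groups of a finite Galois extension $L/K$.

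\textbf{Step 1 (group theory).} We first show that $\mathrm{R}(L/K)$ is the $p$-Sylow subgroup of $\mathrm{I}(L/K)$. Choose a uniformizer $\pi_L$ of $L$. The map $\sigma\mapsto \sigma\pi_L/\pi_L \bmod \mathfrak p_L$ gives an injection $\mathrm{I}(L/K)/\mathrm{R}(L/K)\hookrightarrow k_L^\times$, whose target has order prime to $p$. For the other part, $\mathrm{R}(L/K)$ is a $p$-group, since $\mathrm{R}(L/K)/G_{i+1}$ embeds successively into the additive groups $\mathfrak p_L^i/\mathfrak p_L^{i+1}$. A normal $p$-subgroup with prime-to-$p$ index is the unique $p$-Sylow. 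Passing to the limit over $L\subset K^{\mathrm{ab}}$ gives that $\mathrm{R}$ is the unique pro-$p$-Sylow of $\mathrm{I}$.

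\textbf{Step 2 ($U^{(0)}\cong \mathrm{I}$).} We use the valuation sequence \eqref{valuation_sequence} together with $\Gal(K^{\mathrm{ab}}/K)/\mathrm{I}\cong\Gal(K^{\mathrm{nr}}/K)\cong\hat{\bb Z}$. The normalisation $\rec_{L/K}(\pi)=\mathrm{Frob}$ for $L$ unramified shows the following. The composite $K^\times\to\Gal(K^{\mathrm{nr}}/K)$ is $v$ followed by $\bb Z\hookrightarrow\hat{\bb Z}$, because $\mathrm{Nm}_{L/K}L^\times=U^{(0)}\pi^{n\bb Z}$ for $L$ unramified of degree $n$. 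Hence $\rec_K(U^{(0)})\subset\mathrm{I}$.

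Injectivity of $\rec_K$ on $U^{(0)}$ needs $\bigcap_L \mathrm{Nm}_{L/K}L^\times\cap U^{(0)}=1$. By the existence theorem (Theorem \ref{theorem_main_local_cft}(2)), each open subgroup $U^{(n)}\times\pi^{m\bb Z}$ is a norm group, and these intersect to $1$. Surjectivity onto $\mathrm{I}$ goes as follows. $U^{(0)}$ is compact, so its image is closed. For each finite $L$, $\rec_{L/K}$ is surjective. An element of $\mathrm{I}(L/K)$ comes from some $x\in K^\times$, and $x\pi^{-v(x)}$ has the same image modulo the unramified part. This requires a choice of $\pi$ adapted to $L$; instead, apply the five lemma to the map of exact sequences
\[0\to U^{(0)}\to K^\times\to\bb Z\to0\quad\text{and}\quad 0\to\mathrm{I}\to G^{\mathrm{ab}}\to\hat{\bb Z}\to0,\]
after profinite completion of the top sequence. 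Since $U^{(0)}$ is profinite, completion does not change it.

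\textbf{Step 3 ($U^{(1)}\cong\mathrm{R}$).} Given Step 2, an isomorphism of profinite groups $U^{(0)}\cong\mathrm{I}$ carries the unique pro-$p$-Sylow to the unique pro-$p$-Sylow. We know $U^{(1)}$ is the unique $p$-Sylow of $U^{(0)}$, because $U^{(0)}/U^{(1)}\cong k^\times$ is prime to $p$ and $U^{(1)}=\projlim U^{(1)}/U^{(i)}$ is pro-$p$. By Step 1, $\mathrm{R}$ is the unique pro-$p$-Sylow of $\mathrm{I}$, so $\rec_K(U^{(1)})=\mathrm{R}$.

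\textbf{Main obstacle.} The main obstacle is Step 2, namely establishing that $\rec_K$ identifies the profinite completion of $K^\times$ with $G^{\mathrm{ab}}$, that is, injectivity and surjectivity in the limit. This rests on the existence theorem, which we take from Theorem \ref{theorem_main_local_cft}. The remaining work is careful bookkeeping of inverse limits, including exactness of the completed valuation sequence.
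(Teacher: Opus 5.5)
The paper gives no proof of this statement; it is recalled from classical local class field theory. There the standard argument proves the finer fact $\rec_{L/K}(U^{(n)})=\Gal(L/K)^n$ (upper numbering) for all $n\geq 0$, for instance via Lubin--Tate theory.

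Your proof is correct, and it is a more economical route tailored to $n=0,1$.
\begin{itemize}
\item Step~2 gets $U^{(0)}\cong\mathrm{I}$ formally from Theorem~\ref{theorem_main_local_cft}. The Frobenius normalisation handles the $\hat{\bb Z}$ quotients. The existence theorem identifies the completion of $K^\times$ along the norm groups (equivalently, along open finite-index subgroups) with $\Gal(K^{\mathrm{ab}}/K)$.
\item Step~3 then gets $U^{(1)}\cong\mathrm{R}$ for free from uniqueness of pro-$p$-Sylow subgroups. This is exactly what the Sylow sentence built into the statement is hinting at.
\end{itemize}
The gain is that you need no ramification theory beyond $G_0/G_1\hookrightarrow k_L^\times$ and $G_i/G_{i+1}\hookrightarrow\mathfrak p_L^i/\mathfrak p_L^{i+1}$ for $i\geq 1$; that second embedding is what Step~1 should say, not ``$\mathrm{R}(L/K)/G_{i+1}$''. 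The price is that the method cannot go beyond $n=1$. The higher upper-numbering groups have no group-theoretic characterisation inside $\mathrm{I}$.

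You should state explicitly that $\rec_K$ is continuous. The preimage of $\Gal(K^{\mathrm{ab}}/L)$ is $\mathrm{Nm}_{L/K}L^\times$, which is open by Theorem~\ref{theorem_main_local_cft}(2). You use this twice: for ``the image of $U^{(0)}$ is closed'', and to treat $U^{(0)}\to\mathrm{I}$ as an isomorphism of profinite groups in Step~3. The completion in Step~2 must likewise be taken with respect to open finite-index subgroups, which by the existence theorem are exactly the norm groups.

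Finally, the finite-level argument you abandoned in Step~2 works without choosing $\pi$. Let $e,f$ be the ramification index and residue degree of $L/K$. Since $v(\mathrm{Nm}_{L/K}L^\times)=f\bb Z$, the subgroup $\rec_{L/K}(U^{(0)})\subseteq\mathrm{I}(L/K)$ has order $[U^{(0)}\mathrm{Nm}_{L/K}L^\times:\mathrm{Nm}_{L/K}L^\times]=[L:K]/f=e=|\mathrm{I}(L/K)|$. Compactness of $U^{(0)}$ then gives surjectivity onto $\mathrm{I}$ with no five lemma.
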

In particular, we get isomorphisms $$K^\times/\ell \xTo{\cong} \Gal(K^{\mathrm{ab}}/K)/\ell\cong \bb Z/\ell\oplus (\mathrm{I}/\mathrm{R})/\ell$$ for $(\ell,p)=1$ and $$K^\times/p^r \xTo{\cong} \Gal(K^{\mathrm{ab}}/K)/p^r\cong \bb Z/p^r\oplus \mathrm{R}/p^r. $$ We restate this as a Corollary to emphasise that Theorem \ref{theorem_main_intro} is a higher dimensional generalisation of this version of local class field theory. 

\begin{corollary}
The reciprocity map $\rec_{K}^{\bb Z/\ell\bb Z}: K^\times/\ell\To \Gal(\bar{K}/K)^{\rm ab}/\ell$ is an isomorphism for all $\ell \in \bb N_{>0}$.
\end{corollary}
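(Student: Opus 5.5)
The plan is to reduce the statement to the structure of $\rec_K$ given by Theorem \ref{theorem_main_local_cft} and the preceding theorem on the unit and ramification filtrations. Write $G=\Gal(\bar K/K)^{\rm ab}=\Gal(K^{\rm ab}/K)$. I would show that $\rec_K$ is injective with dense image and that the induced map on profinite completions is an isomorphism. Reducing modulo $\ell$ then gives the claim, because $K^\times/\ell$ only sees the profinite completion.

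\textbf{Step 1 (the sequences).} Consider the valuation sequence \eqref{valuation_sequence}, $0\to U^{(0)}\to K^\times\to \bb Z\to 0$, and the sequence $0\to \mathrm{I}(K^{\rm ab}/K)\to G\to \Gal(K^{\rm nr}/K)\cong\hat{\bb Z}\to 0$, where the last isomorphism comes from Remark \ref{remark_CFT_finite_field}. The reciprocity map sends $U^{(0)}$ isomorphically onto $\mathrm{I}(K^{\rm ab}/K)$ by the filtration theorem. It also sends a prime element $\pi$ to Frobenius on unramified extensions, by the normalisation in Theorem \ref{theorem_main_local_cft}. Hence $\rec_K$ induces a map of short exact sequences. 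It is the identity-type isomorphism on the left term and the inclusion $\bb Z\hookrightarrow\hat{\bb Z}$ on the right term.

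\textbf{Step 2 (tensor with $\bb Z/\ell$).} Apply $-\otimes\bb Z/\ell$ to both sequences. This yields the long exact sequences $\mathrm{Tor}_1(\bb Z,\bb Z/\ell)=0\to U^{(0)}/\ell\to K^\times/\ell\to\bb Z/\ell\to 0$, and similarly for $G$. Here $\mathrm{Tor}_1(\hat{\bb Z},\bb Z/\ell)=\hat{\bb Z}[\ell]=0$ because $\hat{\bb Z}$ is torsion free. Both rows stay short exact. The left vertical map $U^{(0)}/\ell\to \mathrm{I}/\ell$ is an isomorphism because $\rec_K|_{U^{(0)}}$ is. The right vertical map $\bb Z/\ell\to\hat{\bb Z}/\ell$ is an isomorphism since $\hat{\bb Z}/\ell\hat{\bb Z}=\bb Z/\ell$. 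The five lemma then gives that $\rec_K^{\bb Z/\ell\bb Z}$ is an isomorphism.

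\textbf{Main obstacle.} The main care is needed with the meaning of $\Gal(\bar K/K)^{\rm ab}/\ell$. One should take either the abstract quotient or the quotient by the closed subgroup $\overline{\ell G}$. These agree because $\ell G$ is the image of the compact group $G$ under multiplication by $\ell$, hence closed. One also needs compatibility of the quotient map $G\to\hat{\bb Z}$ with $\rec_K$ on all of $K^\times$, not just on norm groups of finite unramified extensions. This follows by passing to the limit over the unramified $L/K$ in the Frobenius normalisation. With these points checked, the argument covers uniformly both $(\ell,p)=1$ (via $\bb Z/\ell\oplus(\mathrm{I}/\mathrm{R})/\ell$) and $\ell=p^r$ (via $\bb Z/p^r\oplus\mathrm{R}/p^r$), as recorded in the displays before the corollary.
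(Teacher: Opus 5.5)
Your argument is correct and is essentially the paper's: the paper gets the corollary ``in particular'' from the valuation sequence \eqref{valuation_sequence}, the isomorphism $\rec_K|_{U^{(0)}}\colon U^{(0)}\cong \mathrm{I}(K^{\mathrm{ab}}/K)$ and the Frobenius normalisation, and your map of short exact sequences with $\mathrm{Tor}$-vanishing and the five lemma is a careful version of that deduction. The only difference is that the paper further splits $\mathrm{I}/\ell$ using $U^{(1)}\cong\mathrm{R}$ into the cases $(\ell,p)=1$ and $\ell=p^r$, which serves interpretation rather than the corollary, whereas you treat all $\ell$ uniformly; the injectivity, dense-image and profinite-completion plan in your first paragraph is never used and can be dropped.
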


\section{Class field theory for proper schemes over local fields}\label{section_hcft}
\subsection{The main players}
Even though we just focus on finite and local fields, for the sake of conceptualisation and understanding the bigger picture, it is useful to mention higher local fields.
\begin{definition}
An $n$-\textit{dimensional local field} (or higher local field) is a sequence of fields 
\[ K_0, K_1, \dots, K_n = K \]
such that:
\begin{enumerate}
    \item $K_0$ is a finite field.
    \item For each $i \in \{0, 1, \dots, n-1\}$, the field $K_{i+1}$ is a complete discrete valuation field whose residue field is $K_i$.
\end{enumerate}
\end{definition}
For example, a $0$-local field is a finite field, $1$-local field is a local field and a $2$-local field is a complete discretely valued field whose residue field is a local field. As we have seen, in the finite field case the source of the reciprocity map are the integers and in the local field case the units. The common generalisation needed for higher local fields is that of Milnor $K$-theory.
\begin{definition}\label{definition_KM}
Let $R$ be a (always commutative) ring. We define the $j^\sub{th}$ \textit{Milnor K-group} $K^M_j(R)$ to be the quotient of $(R^{\times})^{\otimes j}$ by the Steinberg relations, i.e. the subgroup of $(R^{\times})^{\otimes j}$ generated by elements of the form $a_1\otimes\cdots\otimes a_j$ where $a_l+a_k=1$ for some $1\leq l<k\leq j$. As usual, the image of $a_1\otimes\cdots\otimes a_j$ in $K^M_j(R)$ is denoted by $\{a_1,\dots,a_j\}$.
\end{definition}


Then Kato has shown \cite{Kato1979,Kato1980} that for an $N$-local field $K$ there is a reciprocity map
$$\rec_K: K^M_N(K)\to \Gal_{}({K}^{\ab}/K)$$
generalising the reciprocity map in Section \ref{section_cft}. We now turn to defining the reciprocity map for schemes. For this we are going to first introduce the right source of the map. This was defined by Bloch in \cite{Bloch1981}:

\begin{definition} Let $K$ be an $N$-local field and $X_K$ be a $K$-scheme of dimension $d=\dim(X_K)$. We set
$$\SK_N(X_K):=\mathrm{coker}[\bigoplus_{x\in X_K^{(d-1)}}K^M_{N+1}(k(x))\xTo{\partial} \bigoplus_{x\in X_K^{(d)}} K^M_N(k(x))],$$
where $X_K^{(i)}$ denotes the set of points of codimension $i$. Here $\partial$ is defined using the so-called \textit{tame symbol}, which is a generalisation of the valuation in (\ref{valuation_sequence}). Indeed, if $x\in X_K^{(d-1)}$, then every point of codimension $1$ one on the normalisation of the curve $\overline{\{x\}}$ defines a discrete valuation, which combined with the norm map and tame symbol on Milnor K-theory allows to define $\partial$. For more details we refer to \cite[Sec. 7]{GilleSzamuely2006}. 
\end{definition}

We will use a different interpretation of these groups in terms of higher Chow groups, also defined by Bloch in \cite{Bl86}:
\begin{definition} Let $k$ be a field and $X_k$ a finite type $k$-scheme. Bloch's \textit{higher Chow groups} are defined as the homology groups of a certain complex
$$\dots\to z^*(X_k,n+1)\xrightarrow{\partial_{n+1}} z^*(X_k,n)\xrightarrow{\partial_n} z^*(X_k,n-1)\xrightarrow{\partial_{n-1}}\dots$$
Here $\Delta^n:= \Spec(k[t_0,...,t_n]/\sum t_i-1)$ and $z^*(X_k,n)\subset z^*(X_k\times \Delta^n)$ is the subgroup of cycles meeting all faces of $X_k\times \Delta^n$ properly (for more details see \textit{loc. cit.}). We denote $$\CH^*(X_k,n):=\ker[ z^*(X_k,n)\xrightarrow{\partial_{n}} z^*(X,n-1)]/\text{im}[z^*(X_k,n+1)\xrightarrow{\partial_{n+1}} z^*(X,n)].$$
If $\Lambda=\bb Z/m\bb Z,\; m\in \bb N$, then we write $\CH^{*}(X_k,n)_{\Lambda}$ for the same definition but with $\Lambda$-coefficients.
\end{definition}
The following proposition relates the previous two definitions. For the proof see for example \cite[Prop. 2.3]{Lu17}. The key ingredients are a local to global spectral sequence for higher Chow groups (see \cite[Sec. 10]{Bl86}) and the isomorphisms $K^M_N(k(x))\xto{\cong}\CH^N(\Spec k(x),N)$ (see \cite{To92}).  
\begin{proposition}
$SK_N(X_K)\cong\CH^{d+N}(X_K,N).$\footnote{Due to the definition of $SK_N$ one also calls higher Chow groups in this bidegree \textit{Chow group of zero cycles with coefficients on Milnor K-theory}.}
\end{proposition}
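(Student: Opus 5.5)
We will compute $\CH^{d+N}(X_K,N)$ with the coniveau (local-to-global) spectral sequence of \cite[Sec. 10]{Bl86} and compare the result with the definition of $\SK_N(X_K)$. Bloch's localization sequence for higher Chow groups of the closed subschemes $\overline{\{x\}}$ yields a spectral sequence
$$E_1^{p,q}=\bigoplus_{x\in X_K^{(p)}}\CH^{d+N-p}(\Spec k(x),-p-q)\Longrightarrow \CH^{d+N}(X_K,-p-q).$$
Here we use that the higher Chow groups of a field are those of the generic points, so we may pass to the limit over open subsets. The abutment we need is in total degree $p+q=-N$.

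\textbf{Vanishing.} For a field $F$ we have $\CH^a(\Spec F,n)=0$ whenever $a>n$, because cycles of codimension $a$ on $\Delta^n_F$ are empty for $a>n$. In total degree $-N$ the relevant term at $x\in X_K^{(p)}$ is $\CH^{d+N-p}(k(x),N-p+d)$ shifted appropriately; let me index directly. A point of codimension $p$ contributes $\CH^{d+N-p}(k(x),n)$ with $n=N+(d-p)$ to the abutment $\CH^{d+N}(X_K,N)$, since the localization shifts $n$ by the codimension difference. More precisely, in this bidegree the $E_1$ row reads
$$\cdots\to\bigoplus_{x\in X^{(d-2)}}\CH^{N+2}(k(x),N+2)\to\bigoplus_{x\in X^{(d-1)}}\CH^{N+1}(k(x),N+1)\to\bigoplus_{x\in X^{(d)}}\CH^{N}(k(x),N)\to 0.$$
Every term lies on the "Milnor diagonal" $\CH^n(F,n)$. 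The terms with the next $n$ and the same weight are of the form $\CH^{a}(F,a-1)$, which feed the differentials into this row. The terms just beyond the diagonal, $\CH^{a}(F,n)$ with $a>n$, vanish. Consequently the group $\CH^{d+N}(X_K,N)$ is the cokernel of the last differential: points of codimension $>d$ do not exist, and every other contribution is zero by the vanishing for $a>n$.

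\textbf{Identification.} By Totaro and Nesterenko--Suslin \cite{To92}, $K^M_n(F)\cong\CH^n(\Spec F,n)$. We then have to check that under these isomorphisms the $d_1$ differential
$$\bigoplus_{x\in X^{(d-1)}}\CH^{N+1}(k(x),N+1)\to\bigoplus_{x\in X^{(d)}}\CH^N(k(x),N)$$
agrees up to sign with Kato's boundary $\partial$: the tame symbol on the normalisation of $\overline{\{x\}}$ followed by the norm. This compatibility is the main obstacle. One first reduces to a regular curve, because the differential factors through normalisation via proper pushforward, and the norm corresponds to pushforward. On a regular curve one checks on symbols $\{f,u_1,\dots,u_N\}$ with the $u_i$ units: the cycle given by the graph of $(f,u_1,\ldots)$ in $\Delta^{N+1}$ has boundary at the closed point equal to $v(f)\cdot\{\bar u_1,\dots,\bar u_N\}$. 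Since such symbols together with the Steinberg relations generate, this suffices.

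Granting the compatibility, the cokernel of $d_1$ is exactly $\SK_N(X_K)$. This gives the isomorphism $\SK_N(X_K)\cong\CH^{d+N}(X_K,N)$.
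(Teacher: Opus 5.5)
Your proposal is correct and follows the same route as the paper: Bloch's local-to-global spectral sequence, the Nesterenko--Suslin--Totaro isomorphism $K^M_n(F)\cong\CH^n(F,n)$, and the identification of $d_1$ with tame symbol followed by norm. One indexing claim is wrong and should be fixed, although the conclusion survives: the localization sequence does not shift $n$, so the terms of total degree $-N$ are $\bigoplus_{x\in X_K^{(p)}}\CH^{d+N-p}(k(x),N)$, which vanish for $p<d$, whereas the Milnor row you display is the row $q=-N-d$, whose codimension-$p$ entry contributes to $\CH^{d+N}(X_K,N+d-p)$ rather than to $\CH^{d+N}(X_K,N)$; the result still holds because every row above it consists of groups $\CH^a(F,n)$ with $a>n$, which kills both the other contributions and all $d_r$, $r\geq 2$, into $E^{d,-N-d}$.
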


We now return to the situation of Section \ref{section_cft}. Let $K$ be a local field, $\roi_K$ the ring of integers of $K$ with maximal ideal $\mathfrak{p}$, uniformizer $\pi$ and residue field $k$ of characteristic $p$. Let $X$ be a smooth projective $\Spec\roi_K$-scheme of relative dimension $d$ with generic fiber $X_K$ and special fiber $X_k$. The groups we are interested in are
$$
SK_1(X_K)\cong\CH^{d+1}(X_K,1)$$ and $$SK_0(X_k)\cong\CH^{d}(X_k).
$$
For higher Chow groups we have a long exact localisation sequence (see \cite{Le01}) at our disposal which allows us to relate the two. We consider the following part of it:
\begin{equation}\label{higher_valuation_exact_sequence}\CH^{d+1}(X,1)_{\Lambda}\To \CH^{d+1}(X_K,1)_{\Lambda}\To \CH^{d}(X_k)_{\Lambda} \To \CH^{d+1}(X)_{\Lambda}=0 \end{equation}
Note that the group $\CH^{d+1}(X)=0$ because it is generated by closed points on the special fiber which can be lifted to smooth curves on $X$ and which are divisors of $\pi$ on these curves.
For $d=0$, this sequence reduces to the valuation sequence (\ref{valuation_sequence}). 
Our line of interpretation will therefore be that $\CH^{d}(X_k)$ captures the unramified covers of $X_K$ and $\CH^{d+1}(X,1)_{}$ the ramified covers. Furthermore, letting $\ell$ be an integer prime to $p$, we will interpret $\CH^{d+1}(X,1)_{\bb Z/\ell\bb Z}$ as capturing the tame and $\CH^{d+1}(X,1)_{\bb Z/p^r\bb Z}$ the wild part.
But before we continue, we need to recall the right replacement of the Galois group in the setting of schemes. We will be very brief. Let $X$ be a connected scheme and $\bar{x}$ a geometric point. The usual topological fundamental group can be identified with the automorphism group of a universal cover. This motivated Grothendieck to define the fiber functor
$$F:\{\text{finite \'etale covers of }X\}\to \mathrm{Sets}, Y\mapsto\Hom_X(\bar{x},Y)$$ 
and to consider the projective system $(X_i)_{i\in I}$ of Galois covers $X_i$ of $X$ which prorepresent $F$ as the universal covering space of $X$. Then one defines
$$\pi_1(X,\bar{x}):=\projlim \mathrm{Aut}_X(X_i).$$
One can show that for a field $k$ this generalises the Galois group, i.e. $\pi_1(\Spec k)\cong \Gal(\bar{k}/ k)$.

We will work with the following cohomological expression of the \'etale fundamental group with finite coefficients. In the following all cohomology groups will be \'etale cohomology groups. The formalism of \'etale cohomology will enable us to use localisation sequences and proper base change on the \'etale side of the comparison.
\begin{proposition}\label{proposition_comparison_pi_coh}
There are isomorphisms
$$ \pi_1^{\ab}(X_k)/\ell\cong \Hom(H^1(X_k,\bb Z/\ell),\bb Q/\bb Z)\cong H^{2d}(X_k,\mu_\ell^{\otimes d})$$
and 
$$ \pi_1^{\ab}(X_k)/p^r\cong \Hom(H^1(X_k,\bb Z/p^r),\bb Q/\bb Z)\cong H^{2d}(X_k,\nu_r(d)[-d])$$
for the special fiber. Here $\nu_r(d)$ are so called \textit{logarithmic de Rham Witt sheaves}, which are $p$-torsion analogues of $\mu_\ell^{\otimes d}$ in characteristic $p$ (see \cite{Milne1986}). For the generic fiber there are isomorphisms
$$ \pi_1^{\ab}(X_K)/\ell\cong \Hom(H^1(X_K,\bb Z/\ell),\bb Q/\bb Z)\cong H^{2d+1}(X_K,\mu_\ell^{\otimes d+1}),$$
\end{proposition}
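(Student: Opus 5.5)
Each of the three chains of isomorphisms splits into two steps. The first identifies $\pi_1^{\ab}/m$ with the Pontrjagin dual of $H^1(-,\bb Z/m)$. The second is a duality statement (Poincaré duality over a finite field, Milne's duality for logarithmic de Rham--Witt sheaves, and Poincaré duality combined with Tate local duality over $K$) that identifies this dual with a top-degree cohomology group.

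\emph{Step 1 (dual of $H^1$).} For a connected scheme $Y$ and any $m$, I would use the standard identification
$$H^1(Y,\bb Z/m)\cong \Hom_{\rm cont}(\pi_1(Y),\bb Z/m)=\Hom(\pi_1^{\ab}(Y)/m,\bb Z/m).$$
This holds because $\bb Z/m$-torsors are the finite étale Galois covers with group a subgroup of $\bb Z/m$. Since $\pi_1^{\ab}(Y)/m$ is a profinite abelian group of exponent $m$, Pontrjagin duality gives
$$\pi_1^{\ab}(Y)/m\cong\Hom(H^1(Y,\bb Z/m),\bb Q/\bb Z).$$
Here $H^1$ carries the discrete topology, and for the proper varieties in question it is finite. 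I would apply this to $Y=X_k$ and $Y=X_K$, with $m=\ell$ or $m=p^r$.

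\emph{Step 2 (duality over $k$).} The variety $X_k$ is smooth and projective of dimension $d$ over the finite field $k$. For $(\ell,p)=1$, geometric Poincaré duality over $\bar k$ and the Hochschild--Serre spectral sequence for $\Gal(\bar k/k)\cong\hat{\bb Z}$, which has cohomological dimension one, give a perfect pairing
$$H^i(X_k,\bb Z/\ell)\times H^{2d+1-i}(X_k,\mu_\ell^{\otimes d})\to H^{2d+1}(X_k,\mu_\ell^{\otimes d})\cong H^1(k,\bb Z/\ell)\cong\bb Z/\ell.$$
Taking $i=1$ identifies the dual of $H^1(X_k,\bb Z/\ell)$ with $H^{2d}(X_k,\mu_\ell^{\otimes d})$. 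For $p^r$-coefficients I would invoke Milne's duality \cite{Milne1986}. It is a perfect pairing
$$H^i(X_k,\nu_r(0))\times H^{d+1-i}(X_k,\nu_r(d))\to H^{d+1}(X_k,\nu_r(d))\cong\bb Z/p^r,$$
where $\nu_r(0)=\bb Z/p^r$. Taking $i=1$ gives the dual of $H^1$ as $H^d(X_k,\nu_r(d))=H^{2d}(X_k,\nu_r(d)[-d])$.

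\emph{Step 2 (duality over $K$).} The variety $X_K$ is smooth and projective of dimension $d$ over the $p$-adic field $K$, which has cohomological dimension two. I would combine geometric Poincaré duality over $\bar K$ with Tate local duality via the Hochschild--Serre spectral sequence. This yields a perfect pairing
$$H^i(X_K,\bb Z/\ell)\times H^{2d+2-i}(X_K,\mu_\ell^{\otimes d+1})\to H^{2d+2}(X_K,\mu_\ell^{\otimes d+1})\cong H^2(K,\mu_\ell)\cong\bb Z/\ell.$$
This holds for every $\ell$, including $p$-powers, since $\mathrm{char} K=0$. Taking $i=1$ gives the last isomorphism.

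The main obstacle is perfectness of the arithmetic duality pairings. Over $K$ one must check that the Hochschild--Serre filtrations on the two sides are dual to each other and that the pairing on each graded piece is Tate's. For the $p$-part over $k$, the whole content of Milne's theorem is needed. I would treat both as black boxes, citing \cite{Milne1986} and the standard arithmetic duality results, and restrict myself to the spectral sequence bookkeeping in degree $1$.
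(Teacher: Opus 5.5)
Your proposal is correct and follows the same route as the paper: the left-hand isomorphisms come from torsors, i.e.\ continuous homomorphisms $\pi_1\to\mathbb{Z}/m$, together with Pontryagin duality, and the right-hand ones from Poincar\'e duality (SGA4 Exp.~XVIII for $\ell$, Milne's Cor.~1.12 for $p^r$). You are more careful than the paper's one-line citation in one respect: you make explicit that over $k$ and over $K$ the geometric duality must be combined with the Galois duality of the base field (Hochschild--Serre over $\hat{\mathbb{Z}}$, resp.\ Tate local duality), and this is what produces degree $2d$ with twist $d$ on the special fiber versus degree $2d+1$ with twist $d+1$ on the generic fiber.
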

\begin{proof}
The isomorphisms on the right are Poincar\'e duality due to \cite[Exp. XVIII]{SGA4} and \cite[Cor. 1.12]{Milne1986} with $\ell$ and $p$ coefficients respectively. The isomorphisms on the left come from the fact that $H^1$ classifies $A$-torsors for a finite abelian group $A$, which correspond to continuous homomorphsims $\pi_1(X)\to A$, and Pontryagin duality.
\end{proof}

\subsection{The reciprocity map.}
Let $K$ be a local field, $\roi_K$ the ring of integers of $K$ with maximal ideal $\mathfrak{p}$ and residue field $k$ of characteristic $p$.
\begin{theorem} 
Let $X_K$ be proper over $K$, then the maps $K^M_1(k(x))\to \pi_1^{\ab}(\Spec k(x))\to \pi_1^{\ab}(X_K)$ defined by local class field theory induce a map
$$\mathrm{rec_{X_K}}:\SK_1(X_K)\to \pi_1^{\ab}(X_K)$$
called the reciprocity map of $X_K$. Similarly, let $X_k$ be proper over $k$, then the maps $K^M_0(k(x))\to \pi_1^{\ab}(\Spec k(x))\to \pi_1^{\ab}(X_k)$ defined in Remark \ref{remark_CFT_finite_field} induce a map
$$\mathrm{rec_{X_k}}:\SK_0(X_k)\to \pi_1^{\ab}(X_k)$$
called the reciprocity map of $X_k$.
\end{theorem}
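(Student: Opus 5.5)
The map is already defined on the direct sum $\bigoplus_{x\in X_K^{(d)}}K^M_1(k(x))=\bigoplus_x k(x)^\times$. For each closed point $x$, $k(x)$ is a finite extension of $K$, hence a local field. We take the composite of $\rec_{k(x)}$ from Theorem \ref{theorem_main_local_cft} with the map $\pi_1^{\ab}(\Spec k(x))\to\pi_1^{\ab}(X_K)$ induced by $x\to X_K$. What has to be shown is that the sum $\rho$ of these maps vanishes on the image of $\partial$. For $X_k$ over a finite field the same scheme applies, with Remark \ref{remark_CFT_finite_field} in place of local class field theory and $K^M_1$, $K^M_0$ in place of $K^M_2$, $K^M_1$. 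There the relation reduces to the classical statement that for a proper curve the divisor of a rational function maps to zero under the Frobenius map.

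\textbf{Reduction to curves.} Fix $y\in X_K^{(d-1)}$ and let $C$ be the normalisation of the closure $\overline{\{y\}}$. Then $C$ is a regular proper curve over $K$, since properness is inherited from $X_K$, and it comes with a map $f:C\to X_K$. By the definition of $\partial$ via normalisation and norms, the component $K^M_2(k(y))\to\bigoplus_x k(x)^\times$ factors as the sum of the tame symbols $\partial_c:K^M_2(K(C))\to k(c)^\times$ over closed points $c\in C$, followed by the norms $N_{k(c)/k(x)}$. Local class field theory is compatible with norms: for a finite extension $L/F$ of local fields, $\rec_F\circ N_{L/F}$ equals $\rec_L$ followed by the transfer-free corestriction map $\Gal(L^{\ab}/L)\to\Gal(F^{\ab}/F)$. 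By functoriality of $\pi_1^{\ab}$, the relation therefore reduces to the case $X_K=C$. So it suffices to prove that for every regular proper curve $C$ over $K$ the composite
$$K^M_2(K(C))\xto{(\partial_c)_c}\bigoplus_{c\in C^{(1)}}k(c)^\times\xto{\sum\rec}\pi_1^{\ab}(C)$$
is zero.

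\textbf{The curve case.} Since $\pi_1^{\ab}(C)$ is profinite, it suffices to check vanishing after passing to each finite abelian étale cover $D\to C$. Equivalently, we check it against each character $\chi\in H^1(C,\bb Q/\bb Z)$, using the description in Proposition \ref{proposition_comparison_pi_coh}. Pairing with $\chi$ turns $\rec_{k(c)}(a)$ into the local invariant $\mathrm{inv}_{k(c)}(\chi|_{k(c)}\cup\delta a)$ in $\mathrm{Br}(k(c))$. The required identity becomes
$$\sum_{c\in C^{(1)}} \mathrm{cor}_{k(c)/K}\bigl(\chi|_{c}\cup \delta\partial_c\{f,g\}\bigr)=0\in \mathrm{Br}(K).$$
By the standard compatibility of the tame symbol with residue maps, each summand equals the residue at $c$ of the class $\chi\cup\{f,g\}\in H^3(K(C),\bb Q/\bb Z(2))$, taken up to sign; this step uses that $\chi$ is unramified at every $c$. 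The relation then follows from the reciprocity law for residues on a proper curve: the sum of the corestrictions of the residues is zero. That law in turn follows from the Gysin/localisation sequence $H^3(K(C))\to\bigoplus_c H^2(k(c),\bb Q/\bb Z(1))\to H^4_c$-type terms together with the pushforward to $\Spec K$ (Kato's reciprocity for $H^3$ of function fields of curves over local fields, \cite{Ka86, Saito1985}).

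\textbf{Main obstacle.} The main difficulty is this reciprocity law on curves over local fields, including the correct treatment of $p$-power torsion characters $\chi$. For $p$-torsion the cohomological residue maps of Kato must be used rather than purity for $\ell$-adic sheaves. I would cite Saito's \cite{Saito1985} class field theory for curves over local fields, where exactly this vanishing is established. I would only verify in detail the prime-to-$p$ part, via the localisation sequence and Poincaré duality on $C$.
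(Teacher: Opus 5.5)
Your proposal is correct and follows the same route as the paper: using the normalisation $C$ of $\overline{\{y\}}$ and the compatibility of local reciprocity with norms, you reduce to showing that $K_2^M(K(C))\to\bigoplus_{c}k(c)^\times\to\pi_1^{\ab}(C)$ is a complex for a regular proper curve $C$, which the paper then takes from \cite[Ch. 1]{Saito1985} (the finite field case being the unramified part of global reciprocity for function fields). One small correction to your ``main obstacle'': since $K$ has characteristic zero here, every integer $n$ is invertible on $C$, so your localisation/Gysin-plus-trace argument already covers $p$-power characters and Kato's residue maps are not needed.
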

\begin{proof}
Replacing $X_K$ by $\overline{\{x\}}$ for $x\in X_K^{(d-1)}$, we may assume that $\dim X_K=1$. Let $\tilde{X}_K$ be the normalisation of $X_K$. Then by the definition of $\partial$ there is a commutative diagram
$$\xymatrix{
K_2^M(k(\tilde{X}_K)) \ar@{=}[d] \ar[r]^-{\partial}  & \bigoplus_{x\in \tilde{X}_K^{(d)}} k(x)^\times \ar[d]^{\rm Norm} \ar[r]^{}  & \pi_1^{\ab}(\tilde{X}_K) \ar[d] \\
K_2^M(k(X_K)) \ar[r]^-{\partial}  &  \bigoplus_{x\in {X}_K^{(d)}} k(x)^\times \ar[r]^{}  & \pi_1^{\ab}(X_K) \
}$$
which reduces us to showing that the upper row is a complex (cf. \cite[Sec. 2 and 3]{SaitoUnramifiedCFTarithSch}). This is shown in \cite[Ch. 1]{Saito1985}. The same proof works in the second case, where instead of reducing to the case of curves over local fields one reduces to the case of a curve over the integers which is treated in classical class field theory. 

\end{proof}

\subsection{The $0$-local field case: finite fields}

Let $X_k$ be a smooth projective scheme over a finite field $k$. Then the class field theory for finite fields, i.e. the class formation $\bb Z\to \Gal(\overline{k}/k)\cong \hat{\bb Z}$ generalises to the following picture: 
\begin{theorem}\label{theorem_CFT_finite_fields} 
There is a commutative diagram with exact rows
$$\xymatrix{
0\ar[r]^{}  & \CH^{d}(X_k)_0 \ar[d]^{\mathrm{rec}^0_{X_k}}_{\cong} \ar[r] & SK_0(X_k)=\CH^{d}(X_k) \ar[d]^{\mathrm{rec}_{X_k}} \ar[r]^-{\deg} & \bb Z \ar@{^{(}->}[d] \ar[r]^{}  & 0 \\
0\ar[r]^{}  & \pi_1^{\ab}(X_k)_0 \ar[r]^{}  & \pi_1^{\ab}(X_k)\ar[r]^{}  & \Gal(\bar{k}/k)\cong \hat{\bb Z}\ar[r]^{}  & 0, \
}$$
in which the first vertical map is an isomorphism.
\end{theorem}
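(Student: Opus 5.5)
We prove the theorem by building the diagram from its two rows, checking that it commutes, and then reducing the isomorphism of the left vertical map to unramified class field theory for varieties over finite fields.

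\textbf{Top row.} This is the degree sequence. Here $\deg$ sends a closed point $x$ to $[k(x):k]$, and $\CH^{d}(X_k)_0$ is defined as its kernel. The degree is well defined on $\CH^d$, since the divisor of a rational function on a proper curve has degree $0$. To see that $\deg$ is surjective we assume $X_k$ geometrically connected; otherwise we replace $k$ by the field of constants. Lang--Weil then gives closed points of every sufficiently large degree $n$, in particular of two coprime degrees, so the gcd of the degrees is $1$.

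\textbf{Bottom row.} The structure map $X_k\to \Spec k$ induces $\pi_1^{\ab}(X_k)\to \Gal(\bar k/k)$. This map is surjective because $X_k$ is geometrically connected: the base change $X_k\otimes_k k'$ stays connected for every finite $k'/k$. Its kernel is by definition $\pi_1^{\ab}(X_k)_0$, which is the image of $\pi_1^{\ab}(X_{\bar k})$.

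\textbf{Commutativity.} The reciprocity map sends a closed point $x$ to the image of $\mathrm{Frob}_x\in\Gal(\overline{k(x)}/k(x))$ (Remark \ref{remark_CFT_finite_field}). Under the projection to $\Gal(\bar k/k)$, $\mathrm{Frob}_x$ maps to $\mathrm{Frob}_k^{[k(x):k]}=\mathrm{Frob}_k^{\deg x}$. Hence the right square commutes with the inclusion $\bb Z\hookrightarrow\hat{\bb Z}$, and $\rec_{X_k}$ restricts to a map $\rec^0_{X_k}$ on kernels.

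\textbf{Left vertical map.} That $\rec^0_{X_k}$ is an isomorphism is the main step, and it is the theorem of Kato--Saito \cite{KaS83} (unramified class field theory, with the degree-zero part finite). I would prove it in three steps.

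\emph{Step 1: finiteness.} Both groups are finite. For $\pi_1^{\ab}(X_k)_0$ this follows from Katz--Lang. For $\CH^d(X_k)_0$ it follows from Kato--Saito, or from Bloch's method for surfaces.

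\emph{Step 2: surjectivity.} We show the image of $\rec_{X_k}$ is dense. By Chebotarev density, Frobenius elements of closed points are dense in $\pi_1^{\ab}$. Because $\pi_1^{\ab}(X_k)_0$ is finite, density together with the degree compatibility gives surjectivity onto it.

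\emph{Step 3: injectivity.} We work modulo $n$ for all $n$, using Proposition \ref{proposition_comparison_pi_coh} to identify the target with $H^{2d}(X_k,\mu_\ell^{\otimes d})$, respectively $H^{2d}(X_k,\nu_r(d)[-d])$. Under this identification the reciprocity map becomes the cycle class map. The idea is to reduce via Bertini to curves through the given points: choose a smooth ample complete-intersection curve $C\subset X_k$ containing the support of a cycle. By Lefschetz, $\pi_1^{\ab}(C)\to\pi_1^{\ab}(X_k)$ is surjective. For curves, the map $\mathrm{Pic}^0(C)\cong J_C(k)\to\pi_1^{\ab}(C)_0$ is an isomorphism by Lang's theorem on the isogeny $F-1$ together with geometric class field theory.

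\textbf{Main obstacle.} Injectivity is the hard step. We must show that a degree-zero cycle on $X_k$ that dies in $\pi_1^{\ab}(X_k)$ already dies in $\CH^d(X_k)$, which requires controlling the kernel of $J_C(k)\to\mathrm{Alb}_X(k)$ by moving cycles between curves. This is exactly where the Kato--Saito argument, which uses higher-dimensional Bloch--Kato type cohomological input and a relation to the Albanese via the Lang isogeny, is needed; I would invoke it as a black box.
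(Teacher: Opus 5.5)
Your proposal follows the paper's proof, which is purely by citation. The exactness of the rows and the commutativity of the right square are the routine checks you give, for $X_k$ geometrically connected, which the statement tacitly assumes. Your surjectivity argument (Chebotarev density plus finiteness of $\pi_1^{\ab}(X_k)_0$) is the standard proof of the surjectivity the paper attributes to Lang. For injectivity you invoke the same black box as the paper, \cite{KaS83} and \cite{CSS83}.

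What needs correcting is your account of what lies inside that black box.

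\textbf{Surfaces, not curves.} The reduction in the cited works is to surfaces, for exactly the reason your Step 3 stalls. For a smooth ample complete-intersection curve $C$, Lefschetz only gives surjectivity of $\pi_1^{\ab}(C)\to\pi_1^{\ab}(X_k)$. So a zero-cycle $z$ on $C$ with $\rec_{X_k}(z)=0$ need not satisfy $\rec_C(z)=0$, and the isomorphism $\mathrm{Pic}^0(C)\cong\pi_1^{\ab}(C)_0$ cannot be applied to $z$. Now take instead a smooth ample complete-intersection surface $S$ containing the support of $z$ (when $d\geq 3$; take $S=X_k$ if $d=2$). Lefschetz gives an isomorphism $\pi_1^{\ab}(S)\xrightarrow{\cong}\pi_1^{\ab}(X_k)$, so $\rec_S(z)=0$. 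The surface case then gives $z=0$ in $\CH_0(S)$, hence $z=0$ in $\CH^d(X_k)$ by push-forward. The surface case is the genuine content of the cited works.

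\textbf{Wrong target.} The relevant target is $\pi_1^{\ab}(X_k)_0$, not $\mathrm{Alb}_{X_k}(k)$, and the two can differ. For an Enriques surface in odd characteristic, $\mathrm{Alb}=0$ while $\pi_1^{\ab}(X_k)_0\cong\bb Z/2$. So $\CH^d(X_k)_0\to\mathrm{Alb}_{X_k}(k)$ need not be injective, and ``controlling the kernel of $J_C(k)\to\mathrm{Alb}_{X_k}(k)$'' is the wrong goal.

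\textbf{Finiteness.} Finiteness of $\CH^d(X_k)_0$ is a consequence of the theorem, not an independent input. Your surjectivity step needs only the Katz--Lang finiteness of $\pi_1^{\ab}(X_k)_0$.
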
 
\begin{proof}
The surjectivity of $\mathrm{rec^0_{X_k}}$ is proved by Lang in \cite{Lang1956}, \cite[p. 405]{LangBulletin}. The injectivity of $\mathrm{rec^0_{X_k}}$ is proved in two different ways in \cite[Thm. 1]{KaS83} and \cite{CSS83} first for surfaces and then reduced to the surface case by a Bertini argument and a Lefschetz hyperplane argument for the \'etale fundamental group. 
\end{proof}

\subsection{The $1$-local field case: the $\ell$-part.}
We turn to the main objective of the article and fix the following notation for the rest of the article: let $K$ be a local field, $\roi_K$ the ring of integers of $K$ with maximal ideal $\mathfrak{p}$ and residue field $k$ of characteristic $p$. Let $X$ be a smooth projective $\Spec\roi_K$-scheme of relative dimension $d$ with generic fiber $X_K$ and special fiber $X_k$.
\begin{theorem}\label{theorem_ell_adic_case}
Let $\ell\in \bb N$ with $(\ell, p)=1$ and $\Lambda=\bb Z/\ell \bb Z$. Then the mod-$\ell$-reciprocity map
$$\rec_{X_K}^{\bb Z/\ell\bb Z}:\CH^{d+1}(X_K,1)_{\Lambda} \to \pi_1^{\ab}(X_K)/\ell$$
is an isomorphism.
\end{theorem}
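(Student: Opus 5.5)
The plan is to compare the localisation sequence (\ref{higher_valuation_exact_sequence}) for higher Chow groups with the corresponding localisation sequence in étale cohomology. Then we apply the five lemma, using Theorem \ref{theorem_CFT_finite_fields} on the special fiber and a base change statement for the ``ramified'' part.

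On the étale side we use the localisation triangle for $X_k\hookrightarrow X\hookleftarrow X_K$ with coefficients $\mu_\ell^{\otimes d+1}$, together with absolute purity for the smooth pair $(X,X_k)$. Since $(\ell,p)=1$, purity gives $H^{i}_{X_k}(X,\mu_\ell^{\otimes d+1})\cong H^{i-2}(X_k,\mu_\ell^{\otimes d})$. This yields an exact sequence
$$H^{2d+1}(X,\mu_\ell^{\otimes d+1})\to H^{2d+1}(X_K,\mu_\ell^{\otimes d+1})\to H^{2d}(X_k,\mu_\ell^{\otimes d})\to H^{2d+2}(X,\mu_\ell^{\otimes d+1}).$$
By proper base change $H^{i}(X,\mu_\ell^{\otimes d+1})\cong H^i(X_k,\mu_\ell^{\otimes d+1})$. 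This group vanishes for $i=2d+2>2d+1=\mathrm{cd}(X_k)$. Hence the sequence ends in $0$, exactly parallel to (\ref{higher_valuation_exact_sequence}).

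Next we check that the cycle class maps (equivalently, the reciprocity maps after Proposition \ref{proposition_comparison_pi_coh}) give a commutative ladder between the two sequences. The middle vertical map is $\rec^{\Lambda}_{X_K}$. The right vertical map is $\rec_{X_k}$ mod $\ell$, which is an isomorphism by Theorem \ref{theorem_CFT_finite_fields}: the kernel and cokernel of $\rec_{X_k}$ are controlled by $\bb Z\hookrightarrow\hat{\bb Z}$, which becomes an isomorphism mod $\ell$, and $\rec^0_{X_k}$ is an isomorphism. The snake lemma and right exactness of $\otimes\Lambda$ then show $\CH^d(X_k)_\Lambda\cong \pi_1^{\ab}(X_k)/\ell$. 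Commutativity of the square involving the boundary maps amounts to compatibility of the tame symbol/valuation with the Gysin boundary. We reduce this to closed points, i.e. to the classical compatibility of $\rec_K$ with the valuation (Frobenius on unramified extensions, Theorem \ref{theorem_main_local_cft}).

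Given the ladder, the five lemma reduces the claim to surjectivity of the left vertical map
$$\CH^{d+1}(X,1)_\Lambda\to H^{2d+1}(X,\mu_\ell^{\otimes d+1})\cong H^{2d+1}(X_k,\mu_\ell^{\otimes d+1}).$$
It suffices to prove surjectivity because that map is the first term of the four-term ladder. We factor this through restriction to the special fiber, $\CH^{d+1}(X,1)_\Lambda\to \CH^{d+1}(X_k,1)_\Lambda\to H^{2d+1}(X_k,\mu_\ell^{\otimes d+1})$. The second map is handled by higher dimensional class field theory over the finite field $k$. In Kato–Saito/Kerz–Saito form, the cycle map $\CH^{d+1}(X_k,1)_\Lambda\to H^{2d+1}(X_k,\mu_\ell^{\otimes d+1})$ is an isomorphism. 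This is the Kato conjecture in degree $0$, or, via Hochschild–Serre, is reduced to $\CH^{d}(X_{\bar k})$ Galois coinvariants. For the first map we need the base change black box, namely that restriction $\CH^{d+1}(X,1)_\Lambda\to\CH^{d+1}(X_k,1)_\Lambda$ is surjective (an isomorphism, in the style of Kerz–Esnault–Wittenberg). Both are treated as given inputs from the literature.

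The main obstacle is this left-hand comparison, in particular the restriction/base change isomorphism for $\CH^{d+1}(-,1)$ with prime-to-$p$ coefficients and its compatibility with the étale cycle class. The rest is formal diagram chasing. If the base change statement is unavailable in the needed generality, I would first try to lift generators directly. The target is generated by classes supported on curves in $X_k$ with unit coefficients, and a Bertini argument lifts such curves to relative curves in $X$ smooth over $\roi_K$. There the statement reduces to the case $d=1$, i.e. to the Picard/Brauer group of an arithmetic surface.
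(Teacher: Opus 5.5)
Your architecture matches the paper's: the localisation ladder, class field theory over $k$ for the right-hand map, and for the left-hand map the factorisation through $\CH^{d+1}(X_k,1)_\Lambda$ via the restriction isomorphism of \cite{Lu17}, proper base change, and bijectivity of the cycle map on $X_k$. The gap is in the diagram chase.

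From a ladder $A\to B\to C\to 0$ over $A'\to B'\to C'\to 0$, surjectivity of $a$ and bijectivity of $c$ give only surjectivity of $b=\rec^{\bb Z/\ell\bb Z}_{X_K}$. For injectivity, take $y\in\ker b$. It lifts to some $x\in\CH^{d+1}(X,1)_\Lambda$ with $a(x)\in\ker\bigl(H^{2d+1}(X,\mu_\ell^{\otimes d+1})\to H^{2d+1}(X_K,\mu_\ell^{\otimes d+1})\bigr)$. Nothing in your ladder forces $x$ to die in $\CH^{d+1}(X_K,1)_\Lambda$. So the claim that ``it suffices to prove surjectivity because that map is the first term'' is false. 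For example, take the top row $\Lambda\xrightarrow{\mathrm{id}}\Lambda\to 0\to 0$, the bottom row $\Lambda\to 0\to 0\to 0$, and $a=\mathrm{id}$: then $a$ and $c$ are bijective but $b$ is not injective.

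The paper closes this gap by extending both rows one step to the left. The new terms are $\CH^d(X_k,1)_\Lambda\to\CH^{d+1}(X,1)_\Lambda$ on top and the Gysin map $H^{2d-1}(X_k,\mu_\ell^{\otimes d})\to H^{2d+1}(X,\mu_\ell^{\otimes d+1})$ below. The five lemma then needs two things:
\begin{enumerate}
\item $\rho^{2d+1,d+1}_{X,\ell}$ is injective. Your two black boxes are isomorphisms, so they give this; do not discard it.
\item The new leftmost vertical map $\CH^d(X_k,1)_\Lambda\to H^{2d-1}(X_k,\mu_\ell^{\otimes d})$ is surjective. This is the case $j=1$ of the Kato conjecture over finite fields (Jannsen--Kerz--Saito), an input missing from your list.
\end{enumerate}

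Alternatively, you can bypass the second input. Since $X_k=\mathrm{div}(\pi)$, its local class in $H^2_{X_k}(X,\mu_\ell)$ is, up to sign, $\partial$ of the Kummer class of $\pi\in H^1(X_K,\mu_\ell)$. By exactness, its image in $H^2(X,\mu_\ell)$ therefore vanishes. Hence $i^*i_*\alpha=\alpha\cup i^*\mathrm{cl}(X_k)=0$. Since $i^*$ is an isomorphism by proper base change, the Gysin map into $H^{2d+1}(X,\mu_\ell^{\otimes d+1})$ is zero, and $H^{2d+1}(X,\mu_\ell^{\otimes d+1})\to H^{2d+1}(X_K,\mu_\ell^{\otimes d+1})$ is injective. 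Injectivity of $\rho^{2d+1,d+1}_{X,\ell}$ and of the right-hand map then suffices.

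Either way, you need injectivity of the left vertical map, not just its surjectivity.
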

\begin{proof}
We consider the commutative diagram with exact rows
$$\xymatrix{
\CH^{d}(X_k,1)_{\Lambda}\ar[r]^{} \ar[d]^\cong_{} & \CH^{d+1}(X,1)_{\Lambda}\ar[d]_{\rho_{X,\ell}^{2d+1,d+1}} \ar[r] & \CH^{d+1}(X_K,1)_{\Lambda} \ar[d]_{\rec_{X_K}^{\bb Z/\ell\bb Z}} \ar[r] & \CH^{d}(X_k)_{\Lambda} \ar[d]^\cong_{\rec_{X_k}^{\bb Z/\ell\bb Z}} \ar[r]^{}  & 0 \\
 H^{2d-1}_{}(X_k,\mu_\ell^{\otimes d})\ar[r]^{}  & H^{2d+1}(X,\mu_\ell^{\otimes d+1}) \ar[r]^{}  & H^{2d+1}(X_K,\mu_\ell^{\otimes d+1}) \ar[r]^{}  &   H^{2d}_{}(X_k,\mu_\ell^{\otimes d}) \ar[r]^{}  & 0. \
}$$
The upper sequence is the localization sequence for higher Chow groups, the lower one for \'etale cohomology combined with the Gysin isomorphisms $H^{2d+1}_{X_k}(X,\mu_\ell^{\otimes d+1})\cong H^{2d-1}_{}(X_k,\mu_\ell^{\otimes d})$ and $H^{2d+2}_{X_k}(X,\mu_\ell^{\otimes d+1})\cong H^{2d}_{}(X_k,\mu_\ell^{\otimes d})$. The map $\rec_{X_k}^{\bb Z/\ell\bb Z}$ is an isomorphism by Theorem \ref{theorem_CFT_finite_fields} and Prop. \ref{proposition_comparison_pi_coh}. The map
left vertical map is an isomorphism by the Kato conjectures over finite fields \cite[Conj. 0.3]{Ka86}. Indeed, the Kato conjectures over finite fields imply more generally that there are isomorphisms $\CH^d(X_k,j)_\Lambda\xto{\cong}H^{2d-j}(X_k,\mu_\ell^{\otimes d})$ for all $j\geq 0$. The case $j=0$ is implied by higher dimensional class field theory, the other cases have been proved in joined efforts by Jannsen, Kerz and Saito (see for example \cite{KeS12}). According to our philosophy to use information on the special fiber and base change we use these results as a black box. It should be noted that the cases $j=0,1$ are milder than the cases $j\geq 2$.
 
It remains to show that $\rho_{X,\ell}^{2d+1,d+1}$ is an isomorphism, and indeed, there is a commutative diagram of isomorphisms
$$\xymatrix{
\CH^{d+1}(X,1)_{\Lambda} \ar[d]^-{\cong}_{\rho_{X,\ell}^{2d+1,d+1}} \ar[r]^{res^{d+1,1}}_-{\cong} & \CH^{d+1}(X_k,1)_{\Lambda} \ar[d]^-{\cong}_{\rho_{X_k,\ell}^{2d+1,d+1}}  \\
H^{2d+1}(X,\mu_\ell^{\otimes d+1}) \ar[r]_-{\cong}  & H^{2d+1}(X_k,\mu_\ell^{\otimes d+1}).  \
}$$
The lower horizontal map is an isomorphism by proper base change for \'etale cohomology. The map $\rho_{X_k,\ell}^{2d+1,d+1}$ can easily be seen to be an isomorphism using the coniveau spectral sequence for \'etale cohomology and cohomological dimension. For the fact that $\rho_{X_k,\ell}^{2d+1,d}$ is an isomorphism 
we use the fact that $res^{d+1,1}$ is an isomorphism more generally for any smooth projective scheme over a henselian discrete discrete valuation ring \cite{Lu17}. The latter result is a Zariski analogue of proper base change, i.e. a proper base change theorem for motivic cohomology, and can be proved by purely geometric techniques using Bertini theorems and a reduction to relative curves. This is our second black box.
\end{proof}

\subsection{The $1$-local field case: the $p$-part, summary and open problems}
The story concerning the $p$-part is more difficult. We keep the notation of the previous section. Furthermore, we denote by $X_n:=X\times_{\Spec \roi_K}\Spec \roi_K/\pi^n$, where $\pi\in \roi_K$ is a uniformizer. In particular $X_1=X_k$.
\begin{theorem}\label{theorem_JS_p} \cite[Thm. 1.8(3)]{JS03}
Let $d=\dim X_K=2.$ Then the mod-$p^r$-reciprocity map
$$\rec_{X_K}^{\bb Z/p^r\bb Z}:\CH^{d+1}(X_K,1)_{\bb Z/p^r\bb Z}\to H^{2d+1}(X_K,\mu_{p^r}^{\otimes d+1})= \pi_1^{\ab}(X_K)/p^r$$
is an isomorphism for all $r\geq 1$.
\end{theorem}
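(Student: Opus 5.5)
We mimic the proof of Theorem \ref{theorem_ell_adic_case}. With $\Lambda=\bb Z/p^r\bb Z$, the first step is to set up a commutative diagram with exact rows
$$\xymatrix{
\CH^{d+1}(X,1)_{\Lambda}\ar[d]_{\rho} \ar[r] & \CH^{d+1}(X_K,1)_{\Lambda} \ar[d]_{\rec_{X_K}^{\Lambda}} \ar[r] & \CH^{d}(X_k)_{\Lambda} \ar[d]^{\rec_{X_k}^{\Lambda}} \ar[r]  & 0 \\
H^{2d+1}(X,\mu_{p^r}^{\otimes d+1}) \ar[r]  & H^{2d+1}(X_K,\mu_{p^r}^{\otimes d+1}) \ar[r]  &   H^{2d}(X_k,\nu_r(d)[-d]) \ar[r]  & 0.
}$$
The top row is the localisation sequence (\ref{higher_valuation_exact_sequence}). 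For the bottom row, purity for $p$-adic étale cohomology fails naively, so we replace it by the Gysin/boundary map into the logarithmic de Rham--Witt cohomology of $X_k$. Concretely, we use the Bloch--Kato--Hyodo filtration on $R^{q}j_*\mu_{p^r}^{\otimes q}$, whose top graded piece is $\nu_r(q-1)$ via the symbol map and tame symbol. We then check that the bottom row is exact at the relevant spot and that both squares commute; for the second square this reduces, via the symbol description, to the compatibility of the tame symbol with the valuation (\ref{valuation_sequence}). The right vertical map is an isomorphism by Theorem \ref{theorem_CFT_finite_fields} together with Proposition \ref{proposition_comparison_pi_coh}.

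By the four/five lemma, we then need $\rho$ to be surjective onto the kernel of the lower right map, and we need injectivity modulo the image of the left-hand term. This means showing that the "ramified part" $\CH^{d+1}(X,1)_\Lambda$ surjects onto the part of $H^{2d+1}(X_K,\mu_{p^r}^{\otimes d+1})$ coming from $X$, which is dual to the wildly ramified characters. Since $\mu_{p^r}$ is not prime to the residue characteristic, proper base change is unavailable. We therefore replace $X_k$ by the thickenings $X_n$ and use
$$H^{2d+1}(X,\mu_{p^r}^{\otimes d+1})\cong \projlim_n H^{2d+1}(X_n,\mathcal{S}_r(d+1)),$$
where $\mathcal{S}_r$ is the syntomic complex (Kurihara / Fontaine--Messing) and $H^*(X,-)$ is replaced by the corresponding cohomology of the formal completion. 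On the Chow side, the analogous step is the restriction $\CH^{d+1}(X,1)_\Lambda\to \projlim_n H^{d+1}(X_n,K^M_{d+1})_\Lambda$, i.e.\ Kato--Saito-type relative Milnor $K$-cohomology. The Bloch--Kato filtration $U^{(i)}$ on $K^M_{d+1}$ plays the role of the unit filtration.

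With $d=2$, the key inputs are as follows. The group $H^{3}(X_n,K^M_3)$ is a Zariski cohomology group in top degree of a surface, so we can compute it via the Gersten complex. The syntomic cohomology $H^5(X_n,\mathcal S_r(3))$ can be analysed through the graded pieces of the Bloch--Kato filtration, which are explicit in terms of differential forms $\Omega^{q}_{X_k}$, $\Omega^{q-1}_{X_k}/B$, and so on. The comparison on graded pieces then becomes a statement about Zariski versus étale cohomology of coherent sheaves and of $\nu_1(q)$ on the surface $X_k$. In degree $2=\dim X_k$ these statements follow from the cohomological dimension bounds and the Kato conjecture in low degree ($j=0,1$), which hold in dimension $2$. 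Dévissage in $r$ and the five lemma then give the isomorphism.

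The main obstacle is the last step. We must show that the symbol maps induce isomorphisms on every graded piece of the filtrations, and that the filtrations are exhaustive and finite in the relevant degree. This works for surfaces because only degrees $\le 2$ of coherent and de Rham--Witt cohomology of $X_k$ enter; that degree bound is exactly what fails for $d\ge 3$. I would first handle $r=1$ by explicit comparison of graded pieces, then induct on $r$ using the sequences $0\to\mu_{p}\to\mu_{p^{r}}\to\mu_{p^{r-1}}\to0$ on both sides.
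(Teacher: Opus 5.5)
The paper does not prove this theorem itself. It cites Jannsen--Saito, whose argument runs through the Kato conjectures with $p$-coefficients and the Bloch--Kato conjecture. Your reduction to the map $\rho$ is instead the alternative route the paper sketches right after the statement, once the coefficients on $X$ are taken to be Sato's $\mathcal{T}_r(d+1)$. The paper explicitly leaves that route incomplete, and your gap is exactly where the paper locates the open problem: you never show that
\[
\mathrm{res}\colon \CH^{d+1}(X,1)_{\bb Z/p^r\bb Z}\to \varprojlim_n H^d(X_k,\mathcal{K}^M_{X_n,d+1}/p^r)
\]
is an isomorphism. Your graded-pieces analysis (Bloch--Kato filtration, syntomic complexes, differential forms on $X_k$) compares two objects that both live on the thickened special fiber: the Milnor $K$-cohomology of the $X_n$ and the syntomic/Tate-twist cohomology. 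That comparison is the one the paper simply quotes from \cite[Thm. 5.2]{LuedersGerstenTatePTwists}. To reach $\rho$ you also need an \emph{algebraization} statement:
\begin{itemize}
\item every compatible system of classes on the $X_n$ comes from a cycle on $X$;
\item a class in $\CH^{d+1}(X,1)$ that vanishes on every $X_n$ is divisible by $p^r$.
\end{itemize}
Nothing in your sketch addresses this, and your degree bound for surfaces plays no role in it. This is precisely the Question preceding Conjecture \ref{conjecture2}. For $d=2$ the only available proof is Theorem \ref{theorem_Haas_L}, which goes through \cite[Thm. 7.2]{LudersHaas} and hence through Theorem \ref{theorem_JS_p} itself, so invoking it would be circular. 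In the proof of Theorem \ref{theorem_ell_adic_case} this role is played by the geometric isomorphism $\CH^{d+1}(X,1)_\Lambda\cong \CH^{d+1}(X_k,1)_\Lambda$ of \cite{Lu17}, and no $p$-adic analogue of it is known.

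Several smaller points are also off.
\begin{itemize}
\item Proper base change holds for all torsion sheaves, $p$-torsion included; the paper uses it in the proof of Theorem \ref{theorem_Haas_L}. What fails with $p$-coefficients is purity.
\item That is why the naive sheaf $\mu_{p^r}^{\otimes d+1}$ on $X$ must be replaced by $\mathcal{T}_r(d+1)$. With your coefficients, neither the identification of $H^{2d+2}_{X_k}(X,-)$ with $H^d(X_k,\nu_r(d))$ nor the exactness of your bottom row is available. For $\mathcal{T}_r(d+1)$, purity in weight $d+1$ \cite[Lem. 7.3.3]{Sa07} identifies the local cohomology terms with $H^{d-1}(X_k,\nu_r(d))$ and $H^{d}(X_k,\nu_r(d))$.
\item Consequently the thickenings are not needed on the \'etale side at all. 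They are needed only to give the Chow side something to be compared with, and that is exactly where the missing algebraization enters.
\item $H^{d+1}(X_n,\mathcal{K}^M_{d+1})$ vanishes on a surface for dimension reasons, so the group you want is $H^d$.
\item For injectivity, the four-lemma needs the extra left column $\CH^{d}(X_k,1)_{\bb Z/p^r\bb Z}\to H^{d-1}(X_k,\nu_r(d))$ to be surjective. This is the Kato conjecture with $p$-coefficients, and it belongs in the reduction step rather than in your graded-pieces comparison.
\end{itemize}
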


The proof of loc.cit. involves the Kato conjectures with $p$-coefficients and the Bloch--Kato conjectures. We propose a different approach which is analogous to the approach in the proof of Theorem \ref{theorem_ell_adic_case}: consider the commutative diagram with exact rows
$$\xymatrix{
\CH^{d}(X_k,1)_{\bb Z/p^r\bb Z}\ar[r]^{} \ar[d]_{\cong} & \CH^{d+1}(X,1)_{\bb Z/p^r\bb Z}\ar[d]_{\rho_{X,p^r}^{2d+1,d+1}} \ar[r] & \CH^{d+1}(X_K,1)_{\bb Z/p^r\bb Z} \ar[d]_{\rec_{X_K}^{\bb Z/p^r\bb Z}} \ar[r] & \CH^{d}(X_k)_{\bb Z/p^r\bb Z} \ar[d]_{\cong} \ar[r]^{}  & 0 \\
 H^{2d+1}_{X_k}(X,\cal T_r(d+1))\ar[r]^{}  & H^{2d+1}(X,\cal T_r(d+1)) \ar[r]^{}  & H^{2d+1}(X_K,\mu_{p^r}^{\otimes d+1}) \ar[r]^{}  &   H^{2d+2}_{X_k}(X,\cal T_r(d+1)) & \
}$$
in which the $\cal T_r(d+1)$ are so called $p$-adic \'etale Tate twists. These are considered to be the right $p$-coefficients in mixed characteristic, at least for smooth and semi-stable schemes, and are defined by gluing $\mu_{p^r}^{\otimes d+1}$ coming from the generic fiber and $\nu_r(d)$ coming from the special fiber. For more details we refer to \cite{Sa07}. The outer vertical morphisms are isomorphisms by purity and the Kato conjectures with $p$-coefficients over finite fields. Indeed, in weight $d+1$, one has full purity for the Tate twists by \cite[Lem. 7.3.3]{Sa07} implying that $H^{2d+1}_{X_k}(X,\cal T_r(d+1))\cong H^{d-1}(X_k,\nu_r(d))$ and $H^{2d+2}_{X_k}(X,\cal T_r(d+1))\cong H^{d}(X_k,\nu_r(d))$. These latter groups are isomorphic to the respective Chow groups in the upper row by the Kato conjectures and higher class field theory respectively (see for example \cite[Thm. 9.3(ii)]{KeS12}).

In order to show that $\rec_{X_K}^{\bb Z/p^r\bb Z}$ is an isomorphism it therefore suffices to show that $\rho_{X,p^r}^{2d+1,d+1}$ is an isomorphism. 
For this we need to relate the group $\CH^{d+1}(X,1)_{\bb Z/p^r\bb Z}$ to information on the special fiber. The right objects to consider for this purpose are the groups
$$H^d(X_k,\cal K^M_{X_n,d+1}/p^r),$$
where we denote by $\cal K^M$ the sheafification of the presheaf defined in Def. \ref{definition_KM}.
These groups can be interpreted as the higher Chow groups "$\CH^{d+1}(X_n,1)_{\bb Z/p^r\bb Z}$'' of the thickened special fiber $X_n$. Note that a version of Bloch's formula tells us that $H^d(X_k,\cal K^M_{X_1,d+1}/p^r)\cong \CH^{d+1}(X_1,1)_{\bb Z/p^r\bb Z}$. The relation with $\CH^{d+1}(X,1)_{\bb Z/p^r\bb Z}$ is through the restriction map 
$$\mathrm{res}: \CH^{d+1}(X,1)_{\bb Z/p^r\bb Z}  \xto{\cong} H^d_{\mathrm{Nis}}(X,\cal K^M_{X,d+1}/p^r)\to \varprojlim_n H^d_{\mathrm{Nis}}(X_k,\cal K^M_{X_n,d+1}/p^r). $$
This definition requires the Gersten conjecture for Milnor K-theory for the isomorphism on the left (again a version of Bloch's formula) which is known in the Nisnevich topology by \cite{LuMo20}. The Nisnevich cohomology groups on the right are in most cases isomorphic to their Zariski analogues.
\begin{theorem}\label{theorem_Haas_L} (\cite[Cor. 1.9]{LudersHaas})
Let $d=\dim X_K=2.$ Then the restriction map
$$\mathrm{res}:\CH^{d+1}(X,1)_{\bb Z/p^r\bb Z}  \xto{\cong}  \varprojlim_n H^d_{\mathrm{Nis}}(X_k,\cal K^M_{X_n,d+1}/p^r) $$
is an isomorphism for all $r\geq 1$.
\end{theorem}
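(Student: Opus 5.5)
The strategy is to compare both sides of $\mathrm{res}$ with $p$-adic étale cohomology, where proper base change and the Jannsen--Saito theorem (Theorem \ref{theorem_JS_p}) are available. Consider the square
$$\xymatrix{
\CH^{d+1}(X,1)_{\bb Z/p^r\bb Z} \ar[d]_{\rho_{X,p^r}^{2d+1,d+1}} \ar[r]^-{\mathrm{res}} & \varprojlim_n H^d_{\mathrm{Nis}}(X_k,\cal K^M_{X_n,d+1}/p^r) \ar[d]^{\rho} \\
H^{2d+1}(X,\cal T_r(d+1)) \ar[r]^-{\cong} & \varprojlim_n H^{2d+1}(X_n,\cal T_r(d+1)|_{X_n}).
}$$
The right vertical map $\rho$ is induced by the symbol (dlog) maps $\cal K^M_{X_n,d+1}/p^r\to \cal H^{d+1}(\cal T_r(d+1))$ on the thickenings. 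The lower map is an isomorphism by proper base change, since $X$ is proper over the henselian ring $\roi_K$ and $H^{2d+1}(X_n,-)=H^{2d+1}(X_k,-)$ by topological invariance of the étale site. The plan is to prove that $\rho_{X,p^r}^{2d+1,d+1}$ and $\rho$ are both isomorphisms; the claim then follows from the commutative square.

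\emph{Step 1: $\rho_{X,p^r}^{2d+1,d+1}$ is an isomorphism.} For $d=2$, Theorem \ref{theorem_JS_p} says $\rec_{X_K}^{\bb Z/p^r\bb Z}$ is an isomorphism. In the diagram with exact rows preceding Theorem \ref{theorem_Haas_L}, the outer vertical maps are isomorphisms by purity \cite[Lem. 7.3.3]{Sa07} and the Kato conjectures. One also needs the map $H^{2d+1}_{X_k}(X,\cal T_r(d+1))\cong H^{d-1}(X_k,\nu_r(d))$ on the left to be compatible with the term $\CH^{d}(X_k,1)_{\bb Z/p^r\bb Z}$ and with the kernel of the bottom row. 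If the bottom row is also exact on the left, the four lemma gives injectivity and surjectivity of $\rho_{X,p^r}^{2d+1,d+1}$. Otherwise I would use the one-dimensional local version directly: $\CH^{3}(X,1)$ is Nisnevich cohomology $H^2(X,\cal K^M_3)$, and the Bloch--Kato isomorphism $\cal K^M_3/p^r\cong \cal H^3(\cal T_r(3))$ together with the coniveau spectral sequence identifies $H^2(X,\cal K^M_3/p^r)$ with $H^5(X,\cal T_r(3))$. This works because cohomological-dimension bounds kill the other terms in top degree.

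\emph{Step 2: $\rho$ is an isomorphism (the main obstacle).} On each $X_n$ I would use the Bloch--Kato--Gabber description of $\cal H^{q}(\cal T_r(q))|_{X_n}$: the symbol map $\cal K^M_{X_n,d+1}/p^r\to$ (the corresponding étale sheaf of $p$-adic vanishing cycles) is compatible with the unit filtrations $U^{m}\cal K^M$, generated by symbols $\{1+\pi^m a,\dots\}$. Its graded pieces are described by Bloch--Kato in terms of $\Omega^{d}_{X_k}$, $\Omega^{d-1}_{X_k}$ and $B,Z$-subsheaves. In the pro-system over $n$ these graded pieces are coherent, or at least $p$-torsion quasi-coherent with Frobenius structure. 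Since $\dim X_k=2=d$, the relevant cohomology lives in degree $d$, the top degree, and Zariski versus étale versus Nisnevich cohomology of these coherent graded pieces agree. I would then compare $H^d_{\mathrm{Nis}}(X_k,\mathrm{gr}^m)$ with $H^{d+1}_{\mathrm{et}}(X_k,\mathrm{gr}^m)$. This is where the degree shift comes from: Artin--Schreier type sequences $0\to \nu\to Z\xrightarrow{1-C} \Omega\to 0$ contribute one extra étale degree over the finite field $k$. Dévissage over $m$ and passage to $\varprojlim_n$, using Mittag-Leffler since all groups are finite, would then give $\rho$ bijective. Checking that the top-degree Nisnevich cohomology of the filtered Milnor $K$-sheaf matches the étale side exactly on the pro-level is the delicate point. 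I expect it to use that $d=2$ crucially, via Kato's and Saito's surface computations, and this is precisely why the statement is restricted to $d=2$.
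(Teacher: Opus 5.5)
Your square (comparison maps to $p$-adic \'etale cohomology on both sides, proper base change along the bottom) is exactly the diagram in the paper's proof. There, however, both vertical isomorphisms are simply quoted: the left one from \cite[Thm. 7.2]{LudersHaas}, whose proof uses Theorem \ref{theorem_JS_p}, and the right one from \cite[Thm. 5.2]{LuedersGerstenTatePTwists}. Your attempts to derive them yourself do not go through.

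In Step 1 the four lemma only gives surjectivity of $\rho^{2d+1,d+1}_{X,p^r}$. For injectivity, take a class in the kernel: it comes from $\CH^d(X_k,1)_{\mathbb{Z}/p^r\mathbb{Z}}$, and its image in $H^{2d+1}_{X_k}(X,\mathcal{T}_r(d+1))$ lies in the image of the boundary from $H^{2d}(X_K,\mu_{p^r}^{\otimes d+1})$. To conclude you need this image to be hit already by $\CH^{d+1}(X_K,2)_{\mathbb{Z}/p^r\mathbb{Z}}$, or the boundary to vanish, and you prove neither.

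Your fallback is false. In the coniveau spectral sequence for $H^{2d+1}(X,\mathcal{T}_r(d+1))$, the terms other than $\CH^{d+1}(X,1)/p^r$ are not killed by cohomological dimension. For instance, the $E_1$-term of coniveau degree $d-1$ and total degree $2d+1$ contains $\bigoplus_{x\in X_K^{(d-1)}}H^3(k(x),\mu_{p^r}^{\otimes 2})$. This is nonzero because $k(x)$, the function field of a curve over $K$, has $p$-cohomological dimension $3$. The resulting $E_2$-terms are Kato homology groups of $X$, and their vanishing is a $p$-adic Kato conjecture, which is the kind of deep input the paper says enters Theorem \ref{theorem_JS_p}.

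A sanity check shows something must be wrong: your fallback never uses $d=2$. Together with the right vertical isomorphism it would therefore prove Conjecture \ref{conjecture2}, and hence the $p$-part of the reciprocity isomorphism, in every dimension, which the paper presents as open. In the $\ell$-adic case the cohomological-dimension argument does work on $X_k$, which is why the paper first passes to the special fibre via $\mathrm{res}^{d+1,1}$; for $p$ the analogue of that passage is precisely the statement being proved.

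In Step 2 you place the difficulty, and the restriction to $d=2$, in the wrong spot. The paper quotes $\varprojlim_n H^d_{\mathrm{Nis}}(X_k,\mathcal{K}^M_{X_n,d+1}/p^r)\cong H^{2d+1}(X,\mathcal{T}_r(d+1))$ with no hypothesis on $d$; this is why Conjecture \ref{conjecture2} would settle $d\geq 3$. The hypothesis $d=2$ enters only through the left vertical map, i.e. through Theorem \ref{theorem_JS_p}.

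Your degree bookkeeping is also off. If the graded pieces are coherent, their $H^{d+1}_{\mathrm{et}}$ on the $d$-dimensional $X_k$ vanishes, so comparing $H^d_{\mathrm{Nis}}$ with it makes no sense. Moreover, $\mathcal{H}^{d+1}(i^*\mathcal{T}_r(d+1))$ sits in degree $d+1$, so the group to compare with $H^d_{\mathrm{Nis}}$ is $H^d_{\mathrm{et}}(X_k,\mathcal{H}^{d+1}(i^*\mathcal{T}_r(d+1)))$, in the same degree $d$. There is no Artin--Schreier degree shift to account for. The only $H^{d+1}_{\mathrm{et}}$-contribution to total degree $2d+1$ comes from the lower sheaf $\mathcal{H}^{d}(i^*\mathcal{T}_r(d+1))$, which your sketch does not address.
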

\begin{proof}
By \cite[Thm. 7.2]{LudersHaas}, whose proof uses Theorem \ref{theorem_JS_p}, there is an isomorphism $\CH^{d+1}(X,1)_{\bb Z/p^r\bb Z}\xto{\cong} H^{2d+1}(X,\cal T_r(d+1))$ and by \cite[Thm. 5.2]{LuedersGerstenTatePTwists} (see also \cite[Prop. 1.4]{Lueders2019}) there is an isomorphism $$\varprojlim_n H^d_{\mathrm{Nis}}(X_k,\cal K^M_{X_n,d+1}/p^r) \xto{\cong}  H^{2d+1}(X,\cal T_r(d+1)).$$ The theorem now follows from the commutative diagram
$$\xymatrix{
\CH^{d+1}(X,1)_{\bb Z/p^r\bb Z} \ar[d]_-{\cong} \ar[r]^-{\mathrm{res}} & \varprojlim_n H^d_{\mathrm{Nis}}(X_k,\cal K^M_{X_n,d+1}/p^r) \ar[d]^-{\cong}  \\
H^{2d+1}(X,\cal T_{p^r}(d)) \ar[r]^-{\cong}  & H^{2d+1}(X_k,i^*\cal T_{p^r}(d))  \
}$$
in which the lower horizontal morphism is an isomorphism by proper base change.
\end{proof}

Theorem \ref{theorem_Haas_L} uses Theorem \ref{theorem_JS_p} in an essential manner. Therefore, according to the philosophy we have been employing we would like to ask and pose the following question and conjecture:



\begin{question}
Let the notation be as above and $d=2$. Is there a geometric proof of the fact that
$$\mathrm{res}:\CH^{d+1}(X,1)_{\bb Z/p^r\bb Z} \to \varprojlim_n H^d_{\mathrm{Nis}}(X_k,\cal K^M_{X_n,d+1}/p^r)$$
is an isomorphism which works for $\roi_K$ a henselian discrete valuation ring with arbitrary residue field? 
\end{question}

\begin{conjecture}\label{conjecture2}
$\mathrm{res}$ is an isomorphism for arbitrary $d$.
\end{conjecture}


If $\roi_K=W(k)$ for a finite field $k$ of characteristic $p>2$, then there is a well-defined map 
$$\Omega^d_{X_1}\to \cal K^M_{d+1,X_n} $$
$$x\dlog y_1\wedge\dots\wedge\dlog y_d\mapsto \{1+x\pi^n,y_1,\dots,y_d\}$$
making the sequence 
$$\Omega^d_{X_1}\to \cal K^M_{d+1,X_n}\to \cal K^M_{d+1,X_{n-1}}\to 0$$ 
exact (see for example \cite[Prop. 2.5]{LuedersDeformationChow}). This sequence induces an exact cohomology sequence 
$$H^d(X_1,\Omega^d_{X_1})\to H^d(X_1,\cal K^M_{d+1,X_n})\to H^d(X_1,\cal K^M_{d+1,X_{n-1}})\to 0.$$ 
Returning to the unit filtration on $\roi_K^\times$ and noting that $\mathrm{coker}[1+\frak p^{m+1}\to 1+\frak p^{m}]\cong \ker[\roi_K^\times/1+\frak p^{m+1} \to \roi_K^\times/1+\frak p^{m}]$, we interpret the group $H^d(X_1,\Omega^d_{X_1})$ as calculating, or at least surjecting onto, the graded pieces of the unit filtration mod $p^r$ in the higher dimensional setting.

\begin{remark}
Since by Serre duality $H^d(X_1,\Omega^d_{X_1})\cong k$, Conjecture \ref{conjecture2} holds if $X_k$ is separably rationally connected by \cite[Thm. 1.1(iii)]{LuedersRatCon} if the map $k\cong H^d(X_1,\Omega^d_{X_1})\to H^d(X_1,\cal K^M_{d+1,X_n})$ is injective (which is not shown in loc. cit. but the proof should be similar to the proof of \cite[Thm. 5.2]{LuedersRatCon}).
\end{remark}

We summarise the general picture we have obtained in the following table:
\begin{table}[!h]
  \begin{adjustbox}{center,max width=\linewidth}
    \begin{tabular}{rrlrrr}
      \toprule
   Ramification & \bf $K$ & \bf $X_K$
              & \bf Graded pieces $K$ & \bf Graded pieces $X_K$ \\
      \midrule
   Unramified          & $\bb Z\to \hat{\bb Z}$         & $\CH^d(X_k)\to \pi_1^{\ab}(X_k)$                     & 0             & 0             \\
   Tamely ram.           & $U^{(0)}/U^{(1)}\to \mathrm{I}/\mathrm{V}$         & \stackbox[r]{$\CH^{d+1}(X,1)_{\bb Z/\ell\bb Z}\to$}{$H^{2d+1}(X,\mu_\ell^{\otimes d+1})$}                      & $\ell$-torsion: $k^\times=\bb F_q^\times$            & $\CH^{d+1}(X_k,1)_{\bb Z/\ell\bb Z}$             \\
    Wildly ramif.            & $U^{(1)}\to \mathrm{V}$  &  $\CH^{d+1}(X,1)_{\bb Z/p^r\bb Z}\to\varprojlim_n H^d(X_k,\cal K^M_{X_n,d+1}/p^r)$                    & $p$-torsion: $k^+=\bb F_q$            & $H^{d}(X_k,\Omega^{d}_{X_k})$             \\

      \bottomrule
    \end{tabular}
  \end{adjustbox}
  \caption{Comparison of classical and higher dimensional class field theory}
  \label{tab:PECs}
\end{table}


\bibliographystyle{acm}
\bibliography{Bibliografie}

\noindent
\parbox{0.5\linewidth}{
\noindent
Morten L\"uders \\
Universität Heidelberg\\
Mathematisches Institut \\
Im Neuenheimer Feld 205 \\
69120 Heidelberg \\
Germany\\
{\tt mlueders@mathi.uni-heidelberg.de}
}

\end{document}